\date{March 21, 2015}
\newtheorem{dummy}{anything}[section]
\newtheorem{theorem}[dummy]{Theorem}
\newtheorem*{thma}{Theorem A}
\newtheorem*{thmb}{Theorem B}
\newtheorem{lemma}[dummy]{Lemma}
\newtheorem{proposition}[dummy]{Proposition}
\theoremstyle{definition}
\newtheorem{definition}[dummy]{Definition}
  \newtheorem{example}[dummy]{Example}
    \newtheorem*{question}{Question}
  \newtheorem*{acknowledgement}{Acknowledgement}
\newcommand{\cF}{\mathcal F}
\newcommand{\cH}{\mathcal H}
\newcommand{\cP}{\mathcal P}
\newcommand{\bC}{\mathbf C}
\newcommand{\bD}{\mathbf D}
\newcommand{\bV}{\mathbf V}
\newcommand{\bbF}{\mathbb F}
\newcommand{\bbQ}{\mathbb Q}
\newcommand{\bbZ}{\mathbb Z}
\newcommand{\Zp}{\bbZ _{(p)}}
\DeclareMathOperator{\Res}{Res}
\DeclareMathOperator{\hdim}{homdim}
  \DeclareMathOperator{\rk}{rank}
 \DeclareMathOperator{\rank}{rank}
\newcommand{\cy}[1]{\bbZ/{#1}}
\newcommand{\la}{\langle}
\newcommand{\ra}{\rangle}
\newcommand{\bd}{\partial}
\def\bZp{\bbZ_{(p)}}
\def\G{\varGamma}
\DeclareMathOperator{\Fix}{Fix}
\DeclareMathOperator{\Or}{Or}
\newcommand\RG{R\G}
\newcommand\un{\underline}
\DeclareMathOperator{\Qd}{Qd}
\newcommand{\bigast}{\divideontimes}
\def\maprt#1{\smash{\,\mathop{\longrightarrow}\limits^{#1}\,}}
\begin{document}

\title[Group actions on spheres with rank one prime power isotropy]
{Group actions on spheres with rank one\\prime power isotropy}\author{Ian Hambleton}
\author{Erg\"un Yal\c c\i n}

\address{Department of Mathematics, McMaster University,
Hamilton, Ontario L8S 4K1, Canada}

\email{hambleton@mcmaster.ca }

\address{Department of Mathematics, Bilkent University,
06800 Bilkent, Ankara, Turkey}

\email{yalcine@fen.bilkent.edu.tr }

\thanks{Research partially supported by NSERC Discovery Grant A4000. The second author was also partially supported by the Scientific and Technological Research Council of Turkey (T\" UB\. ITAK) 
through the research program B\. IDEB-2219.}

\begin{abstract}
We show that a rank two finite group $G$ admits a finite $G$-CW-complex $X\simeq S^n$ with rank one prime power isotropy if and only if $G$ does not $p'$-involve $\Qd(p)$ for any odd prime $p$. This follows from a more general theorem which allows us to 
construct a finite $G$-CW-complex by gluing together a given $G$-invariant family of representations defined on the Sylow subgroups of $G$.
\end{abstract}

\maketitle
 
\section{Introduction}
\label{sect:introduction}

Actions of finite groups on spheres can be studied in various different geometrical settings.  The fundamental examples come from the unit spheres $S(V)$ in a real or complex $G$-representation $V$, and already  natural questions arise for these examples about the dimensions of the non-empty  fixed sets  $S(V)^H$, $H \leq G$, and the structure of the isotropy subgroups. 

A useful way to measure the complexity of the isotropy is the \emph{rank}.
We say that $G$ has \emph{rank $k$} if it contains a subgroup isomorphic to $(\bbZ/p)^k$, for some prime $p$, but no subgroup $(\bbZ/p)^{k+1}$, for any prime $p$. In this paper we answer the following question:

\begin{question}\label{ques:mainquestion} For which finite groups  $G$, does there exist a finite $G$-CW-complex $X\simeq
S^n$ with all isotropy subgroups of rank one ~?
\end{question}

By P.~A.~Smith theory, the rank one assumption on the isotropy subgroups implies that $G$ must have $\rk(G)\leq 2$ (see \cite[Corollary 6.3]{h-yalcin2}). Since every rank one finite group can act freely on a finite
complex homotopy equivalent to a sphere (Swan \cite{swan1}), we can restrict our attention to rank two  groups. 
Here are three  natural settings for the study of finite group actions on spheres:

\begin{enumerate}[(A)]
\item  smooth $G$-actions
  on closed manifolds homotopy equivalent to spheres; 
\item finite $G$-homotopy representations (see tom Dieck 
\cite[Definition 10.1]{tomDieck2});
\item finite $G$-CW-complexes $X \simeq S^n$.
\end{enumerate}

In contrast to  $G$-representation spheres $S(V)$, the non-linear smooth $G$-actions on a smooth manifold $M \simeq S^n$ exhibit more flexibility. For example, in the linear case,  the fixed sets $S(V)^H$ are always linear subspheres. For smooth actions, the fixed sets are smoothly embedded submanifolds but may not even be integral homology spheres. 

Well-known general constraints on smooth actions arise from P.~A.~Smith theory: if $H$ is a subgroup of $p$-power order, for some prime $p$,  then $M^H$ is a $\Zp$-homology sphere. In addition, even if the fixed sets are diffeomorphic to spheres, they may be knotted or linked as embedded subspheres in $M$ (see \cite{Dieck:1985b}, \cite{Davis:1988}).  One can also consider topological $G$-actions, usually with the assumption of local linearity, otherwise the fixed sets may not be locally flat  submanifolds. 

In the setting (B) of $G$-homotopy representations, the objects of study are finite (or more generally finite-dimensional) $G$-CW-complexes $X$ satisfying the property that for each $H\leq G$, the fixed point set $X^H$ is homotopy equivalent to a sphere $S^{n(H)}$ where $n(H)=\dim X^H$. We could also consider a version of this setting where $\dim X^H$  is the same as its homological dimension, and $X^H$ is a $\Zp$-homology $n(H)$-sphere, for $H$ of $p$-power order.

The third setting (C) is the most flexible of all. Here we suppose that $X\simeq S^n$ is a finite $G$-CW-complex homotopy equivalent to a sphere, but do not require that $\dim X = n$. Moreover, we make no initial assumptions about the homology of the fixed sets $X^H$, although the conditions imposed by P.~A.~Smith theory with
 $\bbF_p$-coefficients still hold. In the setting (C), we will see that  $\dim X^H$ must be (much) higher in general than its homological dimension, and this provides new obstructions to understanding our motivating question in setting (A) or (B).

In this paper we provide a complete answer for the existence question in setting (C). Our construction produces $G$-CW-complexes with prime power isotropy.

\begin{thma}\label{thm:main}
Let $G$ be a finite group of rank two. If $G$ admits a finite $G$-CW-complex $X\simeq S^n$ with rank one  isotropy then $G$ is $\Qd(p)$-free. Conversely, if $G$ is $\Qd(p)$-free, then there exists a finite $G$-CW-complex $X\simeq S^n$ with rank one prime power isotropy.
\end{thma}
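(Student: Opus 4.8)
The two implications call for different tools. The necessity is a short P.~A.~Smith theory argument located inside $G$ near a copy of $\Qd(p)$; the converse carries the technical weight and is obtained from a general realization theorem (Theorem~B, stated below) by constructing suitable local data on the Sylow subgroups of $G$, the point being that this data exists exactly when $G$ is $\Qd(p)$-free.

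\textbf{Necessity.} Suppose $X\simeq S^n$ is a finite $G$-CW-complex with rank one isotropy, and suppose for contradiction that $G$ $p'$-involves $\Qd(p)$ for some odd prime $p$, witnessed by $K\trianglelefteq H\le G$ with $p\nmid|K|$ and $H/K\cong\Qd(p)$. Since $p$ is odd, a Sylow $p$-subgroup of $\Qd(p)=(\bbZ/p)^2\rtimes SL_2(p)$ is the extraspecial group $p^{1+2}_{+}$ of exponent $p$, and as $K$ is a $p'$-group a Sylow $p$-subgroup $R$ of $H$ maps isomorphically onto it; the preimage of the normal $(\bbZ/p)^2\trianglelefteq\Qd(p)$ meets $R$ in a complement $E_0\cong(\bbZ/p)^2$, and since $\Qd(p)$ induces the full $SL_2(p)$ on this subgroup, the $H$-conjugates of $R$ that still contain $E_0$ realize \emph{every} line $L\le E_0$ as the centre of a subgroup $P_L\cong p^{1+2}_{+}$ with $E_0\le P_L$. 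Now iterate the Smith-theory identity $\dim Y^{1}+p\cdot\dim Y^{(\bbZ/p)^2}=\sum_{\ell}\dim Y^{\ell}$ (the sum over the $p+1$ lines $\ell$), valid for $(\bbZ/p)^2$-actions on $\bbF_p$-homology spheres $Y$. Every order-$p^2$ subgroup of $P_L$ is elementary abelian of rank two (as $P_L$ has exponent $p$), hence has empty fixed set in $X$; applying the identity to $P_L/L\cong(\bbZ/p)^2$ acting on $X^{L}$ gives $\dim X^{L}=-1$, i.e.\ $X^{L}=\emptyset$, for every line $L\le E_0$. Applying it once more to $E_0$ acting on $X\simeq S^n$ forces $n=-1$ --- a contradiction. (The hypothesis that $p$ is odd is used only through the exponent-$p$ property, which is why $\Qd(2)\cong S_4$ is not excluded.)

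\textbf{The converse, reduction.} Assume $G$ is $\Qd(p)$-free. By Theorem~B it suffices to produce a $G$-invariant family of representations on the Sylow subgroups of $G$ --- concretely, for each prime $p\mid|G|$ a (virtual) real representation $V_p$ of a Sylow $p$-subgroup $G_p$, compatible under the fusion of $G$, whose dimension functions $H\mapsto\dim V_p^{H}$ assemble to a Borel--Smith super class function on $G$ that takes the value $-1$ on every subgroup of rank two; Theorem~B then yields a finite $G$-CW-complex $X\simeq S^n$ with $X^{H}$ a $\Zp$-homology sphere of the prescribed dimension for every $p$-subgroup $H$, so all isotropy of $X$ has rank one, and prime power order by the form of the construction. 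For primes $p$ with $\rk G_p\le1$ one may take a multiple of the restriction to $G_p$ of the reduced regular representation of $G$, adjusting multiplicities to a common dimension. For $p$ with $\rk G_p=2$ one invokes the $\Qd(p)$-free hypothesis: for odd $p$ it is precisely the fusion-theoretic condition (familiar from local group theory, cf.\ \cite{\gls}) forbidding a subgroup $(\bbZ/p)^2\le G_p$ whose $\cF$-automorphism group contains $SL_2(p)$, and --- by the computation in the necessity argument --- this is exactly the obstruction that would force a Borel--Smith dimension function vanishing on rank two subgroups to vanish identically; in its absence such a function exists and, by a Dotzel--Hamrick type realization, is the dimension function of an $\cF_p(G)$-stable virtual representation $V_p$. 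For $p=2$ the corresponding realizability holds for all rank two $2$-groups with no further hypothesis (the tightest case being the Sylow $2$-subgroup of $S_4=\Qd(2)$).

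\textbf{The converse, realization, and the main obstacle.} I would prove Theorem~B by working over the orbit category $\Gamma:=\Or_{\cF}G$, with $\cF$ the family of prime power subgroups of rank $\le1$. From the $V_p$ one reads off the intended Bredon homology coefficient system $\un M$ on $\Gamma$; realizes $\Res_{G_p}\un M$ for each $p$ by a finite free chain complex over $\Zp\Gamma$ (using that $V_p$ provides a genuine finite $G_p$-CW-model with the right fixed-point dimensions); assembles these, together with the rational data, into a single finite free $\bbZ\Gamma$-chain complex $C_*$ with $H_*(C_*)\cong\un M$; and finally realizes $C_*$ topologically by a finite $G$-CW-complex via L\"uck's obstruction theory for chain complexes over orbit categories. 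The step I expect to be the main obstacle is exactly this assembly-and-realization while keeping everything finite: one must control the Swan--Wall finiteness obstruction in $\widetilde K_0(\bbZ\Or_{\cF}G)$ and the higher Postnikov-type obstructions in Bredon cohomology, so that $C_*$ and then $X$ can be taken finite and of the homotopy type of $S^n$, with the fixed-point dimensions pinned to those prescribed by the $V_p$; by comparison the group-theoretic construction of the local data is routine.
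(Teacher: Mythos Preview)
Your necessity argument is essentially the one the paper cites from \cite{unlu1} and \cite[Proposition~5.4]{h-yalcin2}: the Borel formula applied twice inside a $p^{1+2}_+$ arising from $\Qd(p)$. The reduction of the converse to Theorem~B together with the existence of $p$-effective Sylow characters is also what the paper does; note however that the paper does not reprove the latter but invokes Jackson's theorem \cite[Theorem~47]{jackson1}, whereas your appeal to a ``Dotzel--Hamrick type realization'' plus fusion-stability is exactly the content of Jackson's argument and is not as routine as you suggest (and one needs honest, not merely virtual, representations as input to Theorem~B).

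The genuine divergence is in how Theorem~B is proved. You propose a purely algebraic route: assemble local $\Zp\Gamma$-chain complexes (coming from the $S(V_p^{\oplus k})$) into a global finite free $\bbZ\Gamma$-complex over $\Gamma=\Or_{\cF}G$, then realize it topologically via L\"uck-type obstruction theory, controlling the Swan--Wall class in $\widetilde K_0(\bbZ\Gamma)$ and the Postnikov obstructions. The paper takes a geometric detour that sidesteps precisely these difficulties: it first builds a finite-dimensional contractible base $B$ with $\cP$-isotropy and $\Zp$-acyclic $p$-fixed sets (Theorem~\ref{thm:contractible}), then constructs an equivariant spherical fibration $q\colon E\to B$ with fibre type $\{S(V_H^{\oplus k})\}$ (Theorem~\ref{pro:fibconst}); the total space $E$ already has the correct $p$-local fixed-point homology. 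One then performs Oliver--Petrie $G$-CW-surgery on a $G$-map $f_0\colon\coprod_p G\times_{G_p}S(V_p^{\oplus k})\to E$, adding only cells of type $G/H$ (downward over $\cP$) and then free cells; the residual finiteness obstruction therefore lives in $\widetilde K_0(\bbZ G)$ rather than in $\widetilde K_0(\bbZ\Gamma)$, and is killed by taking joins.

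What each approach buys: the fibration plus surgery construction gives an honest geometric target $E$ from the outset, so one never has to confront the Bredon-cohomology realization obstructions you flag, and the finiteness obstruction drops to the classical $\widetilde K_0(\bbZ G)$. Your algebraic approach is closer in spirit to \cite{hpy1,h-yalcin2,h-yalcin3}; it is not obviously wrong, but the assembly step (producing a single $\bbZ\Gamma$-complex from the $p$-local pieces) and the topological realization step are exactly where the earlier papers ran into extra hypotheses when aiming for $\dim X=m$. The paper explicitly notes that allowing $\dim X>m$ is essential, and the fibration-plus-surgery method is what makes this flexibility exploitable without having to solve the orbit-category realization problem directly.
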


The group $\Qd(p)$ is defined as the semidirect product 
$$\Qd(p) = (\bbZ/ p \times \bbZ / p)\rtimes SL_2(p)$$
with the obvious action of $SL_2(p)$ on $\bbZ / p \times \bbZ /p$. We say $\Qd(p)$ is $p'$-\emph{involved in $G$} if there exists a subgroup $K \leq G$, of order prime to $p$, such that $N_G(K)/K$ contains a subgroup isomorphic to $\Qd(p)$. If a group $G$ does not $p'$-involve $\Qd(p)$ for any odd prime $p$, then we say that $G$ is \emph{$\Qd(p)$-free}. 

In our earlier work \cite{h-yalcin3} and \cite{h-yalcin2}, we studied this problem  in the setting (B) of 
\emph{$G$-homotopy representations},
introduced by tom Dieck (see \cite[Definition 10.1]{tomDieck2}). 
We found a list of conditions on a rank two finite group $G$ that guarantees the existence of a finite $G$-homotopy representation with rank one prime power isotropy. Identifying the full list of necessary and sufficient conditions is still an open problem, but we did provide a complete answer \cite[Theorem C]{h-yalcin2} for rank two finite simple groups.

The necessity of the $\Qd(p)$-free condition was established in \cite[Theorem 3.3]{unlu1} and \cite[Proposition 5.4]{h-yalcin2}.
In the other direction, if $G$ is a rank two finite group which is $\Qd(p)$-free then  $G$  has a $p$-effective representation $V_p \colon G_p \to U(n)$ (see Definition \ref{def:peffective}) which can be used to construct finite  $G$-CW-complexes $X \simeq S^n$  with rank one isotropy. The existence of these $p$-effective representations 
was proved by Jackson \cite[Theorem~47]{jackson1} and they were also one of the main ingredients for the constructions in Hambleton-Yal{\c c}{\i}n \cite{h-yalcin2}. 

To do the construction in Theorem A, we prove a more technical theorem. We now introduce more terminology to state this theorem. For each prime $p$ dividing the order of $G$, let $G_p$ denote a fixed Sylow $p$-subgroup of $G$. 

\begin{definition}\label{def:Ginvariant} Suppose that we are given a family of \emph{Sylow representations} $\{V_p\}$ defined on Sylow $p$-subgroups $G_p$, over all primes $p$. We say the family $\{ V_p \}$ is \emph{$G$-invariant} if \begin{enumerate} \item  $V_p$ \emph{respect fusion in $G$}, i.e., the character $\chi _p$ of $V_p$ satisfies 
$\chi_p (gxg^{-1})=\chi_p(x)$ whenever $gxg^{-1} \in G_p$ for some $g \in G$ and $x \in G_p$; and \item for 
all $p$, $\dim V_p$ is equal to a fixed positive integer $n$. \end{enumerate}  \end{definition}

Given a $G$-invariant family of Sylow representations $\{ V_p \}$, we construct a $G$-equivariant spherical fibration $q\colon E \to B$ over a contractible $G$-space $B$ with isotropy in $\cP$ such that  for every $x\in \Fix(B, G_p) = B^{G_p}$, the fiber $q^{-1} (x)$ is $G_p$-homotopy equivalent to $S (V_p ^{\oplus k})$ for some $k\geq 1$ (see Theorem \ref{pro:fibconst}). The total space of this $G$-fibration has many interesting properties: in particular, it admits a $G$-map $$f_0 \colon \coprod _p G \times _{G_p} S(V_p ^{\oplus k}) \to E.$$
By adapting the $G$-CW-surgery techniques introduced by Oliver-Petrie \cite{Oliver:1982} to this $G$-map, we obtain a finite  $G$-CW-complex $X \simeq S^{2kn-1}$ whose restriction to Sylow $p$-subgroups resembles the linear spheres $S(V_p^{\oplus k})$. In particular, we prove the following theorem (see Definition \ref{p-localEq} for the definition of $p$-local $G$-equivalence).

\begin{thmb} Let $G$ be a finite group.   Suppose that $\{V_p\colon G_p \to U(n)\}$ is  a $G$-invariant family of Sylow representations. Then there exists a positive integer $k\geq 1$ and a finite $G$-CW-complex $X\simeq S^{2kn-1}$ with prime power isotropy,  such that  the $G_p$-CW-complex $\Res ^G _{G_p} X$ is $p$-locally $G_p$-equivalent to $S(V_p ^{\oplus k} )$, for every prime $p \mid |G|$,
\end{thmb}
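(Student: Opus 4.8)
The plan is to carry out a two-stage construction: first produce a $G$-equivariant spherical fibration whose fibers over $p$-local fixed points look like $S(V_p^{\oplus k})$, and then apply equivariant surgery to convert this into an honest finite $G$-CW-complex homotopy equivalent to a sphere. For the first stage I would invoke Theorem \ref{pro:fibconst} with the given $G$-invariant family $\{V_p\}$ to obtain a $G$-fibration $q\colon E \to B$ over a contractible $G$-space $B$ with isotropy in $\cP$, such that for each prime $p$ and each $x \in B^{G_p}$ the fiber $q^{-1}(x)$ is $G_p$-homotopy equivalent to $S(V_p^{\oplus k})$ for a uniform $k \geq 1$; the $G$-invariance hypothesis (both that the $V_p$ respect fusion and that $\dim V_p = n$ for all $p$) is exactly what is needed to glue the local Sylow data into such a global fibration. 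The total space $E$ is then a finite-dimensional $G$-CW-complex with prime power isotropy, homotopy equivalent to $S^{2kn-1}$ after projecting to the contractible base, and it comes equipped with the reference $G$-map $f_0\colon \coprod_p G\times_{G_p} S(V_p^{\oplus k}) \to E$.

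The second stage is to replace $E$ by a \emph{finite} $G$-CW-complex. Here I would adapt the Oliver--Petrie $G$-CW-surgery machinery from \cite{Oliver:1982}: starting from $E$ (or a suitable finite skeleton/mapping cylinder built from $f_0$), one performs equivariant cell-trading and surgery below the middle dimension to kill the homology of fixed sets $X^H$ that is not forced by Smith theory, keeping careful track that no new isotropy outside $\cP$ is introduced and that the Sylow restrictions are not disturbed up to $p$-local equivalence. The output is a finite $G$-CW-complex $X$ with prime power isotropy together with a $G$-map $X \to E$ which, for each prime $p$, restricts to a $p$-local $G_p$-equivalence; composing with the fiber inclusions and using that $q^{-1}(x) \simeq_{G_p} S(V_p^{\oplus k})$ over $B^{G_p}$ shows $\Res^G_{G_p} X$ is $p$-locally $G_p$-equivalent to $S(V_p^{\oplus k})$. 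That $X \simeq S^{2kn-1}$ nonequivariantly follows since $E \simeq S^{2kn-1}$ and the surgery map is a (nonequivariant) homology, hence homotopy, equivalence.

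The main obstacle I anticipate is controlling the equivariant surgery on the fixed-point data simultaneously over all subgroups $H$. The Oliver--Petrie techniques require inductively working up the subgroup lattice, at each stage making $X^H \to E^H$ highly connected relative to already-arranged lower strata; the subtlety is that $E^H$ itself need not be a homology sphere for general $H$ (only for $H$ of prime power order, via Smith theory), so one must allow the fixed sets to have the "correct" homology dictated by the fibration rather than forcing them to be spheres. Ensuring the surgery can be done with prime power isotropy — i.e. that trading cells of type $G/H$ with $H$ not of prime power order can always be eliminated — is where the structure of the fibration $E$ (its isotropy lies in $\cP$) and the gap/dimension hypothesis $2kn-1$ (a large odd dimension, providing room for surgery) must be used. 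A secondary technical point is verifying that the $p$-local $G_p$-equivalence survives the global surgery: one checks that the cells added away from the $p$-local part are $\bbZ_{(p)}$-acyclic on $G_p$-fixed sets, so that $\Res^G_{G_p}$ of the surgery map is a $\bbZ_{(p)}$-homology isomorphism on all fixed sets, which by the appropriate equivariant Whitehead theorem gives the claimed $p$-local $G_p$-equivalence with $S(V_p^{\oplus k})$.
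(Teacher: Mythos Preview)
Your two-stage outline (fibration, then Oliver--Petrie surgery on $f_0$) matches the paper's strategy, but there is a genuine gap: you never address the finiteness obstruction in $\widetilde K_0(\bbZ G)$. After the fixed-point surgery and after attaching free cells to make $f_2\colon X_2\to E$ highly connected, the remaining relative homology $K_{d+1}(f_2;\bbZ)\cong H_d(X_2;\bbZ)$ is only a finitely generated \emph{projective} $\bbZ G$-module, not a priori free (Lemma~\ref{lem:Integralproj}). You cannot simply ``kill the homology of fixed sets'' and be done; the class $\sigma(X_2)\in\widetilde K_0(\bbZ G)$ obstructs finishing by free cell attachment. The paper resolves this by passing to an iterated join $\bigast_l X_2 \to \bigast_l E$ and using Swan's theorem that $\widetilde K_0(\bbZ G)$ is finite, so that $\sigma(\bigast_l X_2)=0$ for some $l$ (Lemmas~\ref{lem:join} and~\ref{lem:realization}). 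This is also why the integer $k$ in the statement may need to be enlarged at the end, and why the final map lands in $\bigast_l E$ rather than $E$ itself. Your sketch omits this entirely and implicitly assumes the surgery terminates with a finite complex.

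Two smaller corrections. First, the induction over isotropy goes \emph{downward} from the Sylow subgroups, not ``up the subgroup lattice'': at a nontrivial $p$-subgroup $H$ one applies Proposition~\ref{pro:plocal} to the $N_G(H)/H$-map $X^H\to E^H$, attaching free $N_G(H)/H$-cells (equivalently $G/H$-cells in $X$), having already handled all larger $K$. Second, your concern about eliminating cells of non-prime-power isotropy type is misplaced: both $X_0$ and $E$ already have isotropy in $\cP$, and every cell attached during the induction has isotropy $H\in\cF_p$ (and free cells at the final stage), so prime power isotropy is preserved automatically rather than being something one must engineer.
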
 

This theorem was stated by Petrie \cite[Theorem C]{petrie1978} in a slightly different form and a sketched proof was provided. Related results were proved by tom Dieck   (see \cite[Satz 2.5]{Dieck:1982},  \cite[Theorem 1.7]{Dieck:1985a}). Although we use some of the steps of these arguments, we believe that a proof of Theorem B does not exist in the literature. All the previous constructions seem to aim towards obtaining a  finite $G$-CW-complex $X\simeq S^{m}$ with $\dim X=m$.  However, we showed in \cite{h-yalcin3} and \cite{h-yalcin2} that there are additional necessary conditions for obtaining such a complex with prime power isotropy. Here is a specific example.

\begin{example} Let $G$ denote the dihedral group of order $2q$, with $q$ an odd prime. Let $V_2$ be a trivial representation of $G_2 = \cy 2$, and let $V_q$ be a free unitary representation of $G_q = \cy q$, such that $\dim V_2 = \dim V_q$. Then Theorem B shows that there exists a finite $G$-CW-complex $X \simeq S^{m}$, with $\Fix(X, G_2) \simeq S^{m}$ ($2$-locally),  and $\Fix(X, G_q) = \emptyset$, for some  integer $m=2kn-1$. However, these conditions imply that $\dim X > m$ by \cite[Proposition 2.10]{h-yalcin3} (compare \cite[Theorem 4.2]{Straume:1981a}).
\end{example}

The paper is organized as follows: In Section \ref{sect:acyclic}, we show that for every finite group $G$, there is a finite-dimensional  contractible $G$-space $B$ with prime power isotropy, such that for every $p$-subgroup $H$, the fixed point set $X^H$ is $\bbZ _{(p)}$-acyclic. This might be of independent interest, since  P.~A.~Smith theory only guarantees that the fixed sets are $\bbF_p$-acyclic.  In Section \ref{sect:fibration}, using this space as base space, we construct a $G$-equivariant fibration $q\colon  E \to B$ with fiber type $S(V_H ^{\oplus k})$, for a given compatible family $\{V_H\}$ of representations.  The total space $E$ has only prime power isotropy and its restriction to $G_p$ is $p$-locally $G_p$-equivalent to $S(V^{\oplus k} _p )$ for some $k\geq 1$. However, $E$ is not a finite $G$-CW complex, and this means that the methods of \cite{Oliver:1982} must be applied with care.

 In Section \ref{sect:plocal}, we prove Proposition \ref{pro:plocal} which allows us to kill homology groups to reach to a $p$-local homotopy equivalence on fixed points of $p$-subgroups.
In Section \ref{sect:main}, we prove our main theorems (Theorem A and Theorem B). Theorem A essentially follows from Theorem B once we apply a theorem of Jackson \cite{jackson1} on the existence of $p$-effective characters for rank two finite groups which are $\Qd(p)$-free.

Finally, we remark that Theorem A was also stated in Jackson \cite[Proposition 48]{jackson1}, but the indication of proof appears to confuse  homotopy actions with finite $G$-CW-complexes. The motivation for Theorem A comes from the work of Adem and Smith \cite{adem-smith1} on the existence of free actions of finite groups on a product of two spheres. There is an interesting set of conditions related to this problem which we discussed in detail in \cite[Section 1]{h-yalcin2}. We refer the reader to this discussion for further details on the history of this problem.

\begin{acknowledgement}  The second author would like to thank McMaster University for the support provided by a H.~L.~Hooker Visiting Fellowship, and the Department of Mathematics \& Statistics at McMaster for its hospitality while this work was done. 
\end{acknowledgement}

\section{Acyclic complexes with prime power isotropy}
\label{sect:acyclic}

The main purpose of this section is to prove the following theorem.

\begin{theorem}\label{thm:contractible}
Let $G$ be a finite group and $\cP$ denote the family of all subgroups of $G$ with prime power order. Then there exists a finite-dimensional contractible $G$-CW-complex $X$, with isotropy in $\cP$, such that for every $p$-subgroup $P \leq G$, the fixed point subspace $X^P$ is $\Zp$-acyclic.
\end{theorem}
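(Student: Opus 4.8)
The plan is to build $X$ in two stages. First I would produce a finite-dimensional contractible $G$-CW-complex with isotropy in $\cP$, and then repair it so that the fixed sets of $p$-subgroups become $\Zp$-acyclic rather than merely $\bbF_p$-acyclic. For the first stage, recall the standard construction of a model for $E_{\cP}G$, the classifying space for the family $\cP$ of prime power subgroups: one may take an infinite join of copies of $\coprod_{P \in \cP} G/P$, which gives a contractible $G$-CW-complex with isotropy in $\cP$, but this is not finite-dimensional in general. To get finite dimension, I would instead use the fact that $\cP$ contains the family of all $p$-subgroups for every $p$, and invoke the existence of a finite-dimensional model: e.g. for each prime $p$ dividing $|G|$ there is a finite-dimensional contractible $G$-space with isotropy the $p$-subgroups (one can take a suitable join built from the $p$-subgroup complex, or use that $G$ acts on a finite-dimensional mod-$p$ acyclic complex), and then one combines these across the finitely many primes. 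Alternatively, since we only need $\bbF_p$-acyclicity of fixed sets as the starting point, a cleaner route is to observe that $E_{\cP}G$ can be approximated through its skeleta: the $\cP$-Bredon cohomological dimension of $G$ is finite (it is bounded in terms of the ordinary cohomological dimensions of the Weyl groups $N_G(P)/P$ and of $G/P$-type data), so a finite-dimensional model exists. Either way, call this first space $Y$.

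The second stage is the heart of the argument: upgrading $\bbF_p$-acyclicity to $\Zp$-acyclicity on all $p$-subgroup fixed sets simultaneously. For a fixed prime $p$, P.~A.~Smith theory tells us $Y^P$ is $\bbF_p$-acyclic for every $p$-subgroup $P$. To kill the integral homology away from $p$, I would attach free $G$-cells equivariantly: the obstruction to making $Y^P$ into a $\Zp$-homology point is concentrated in the reduced integral homology groups $\widetilde H_*(Y^P;\bbZ)$, which by $\bbF_p$-acyclicity and the universal coefficient theorem consist entirely of finite groups of order prime to $p$. One proceeds by induction up the subgroup lattice (or on dimension), adding $G$-cells of the form $G \times e^{n+1}$ to bound prime-to-$p$ torsion classes; because the cells are free, they do not disturb the fixed sets $Y^H$ for $H$ of $p$-power order except through the free part, and one can arrange the attaching maps so that the new integral homology in the relevant degree is $p$-local. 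The subtlety is that one must do this coherently for all primes $p$ at once: a cell attached to fix the homology of $Y^P$ for a $p$-subgroup $P$ must not reintroduce prime-to-$q$ torsion in $Y^Q$ for a $q$-subgroup $Q$. Since distinct primes $p \neq q$ have $P \cap Q$-fixed-set interactions only through subgroups that are not of prime power order unless trivial, and free $G$-cells contribute to $Z^H$ only when $H = 1$, I expect the interference to be controllable: the free cells added affect $Y^{\{1\}} = Y$ but this is already contractible and stays contractible after attaching free cells along nullhomotopic maps, while the genuine fixed sets $Y^P$ for $P \neq 1$ are modified only by cells built from the $G$-orbit structure lying over them.

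I would organize the induction so that at each stage we have a finite-dimensional $G$-CW-complex $X_i$, contractible, with isotropy in $\cP$, and with $X_i^P$ being $\Zp$-acyclic for all $p$-subgroups $P$ of order dividing some increasing set of parameters; the key homological input is that $\widetilde H_*(X_i^P;\bbZ)$ is finitely generated (finiteness of the complex) and $p$-torsion-free in low degrees, so finitely many cell attachments suffice and dimension stays bounded. The main obstacle, as indicated, is the coherence across primes and across the subgroup lattice: ensuring that the surgery performed to achieve $\Zp$-acyclicity at $P$ is compatible with the Bredon-type bookkeeping needed so that the final complex is genuinely finite-dimensional and still contractible. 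I expect this to be handled by a careful equivariant obstruction-theory argument, attaching cells in order of increasing dimension and using that the relevant obstruction groups (reduced integral homology localized away from the ambient prime) vanish after finitely many steps.
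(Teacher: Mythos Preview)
There is a genuine gap in your second stage. You propose to upgrade $\bbF_p$-acyclicity of $Y^P$ to $\Zp$-acyclicity by attaching free $G$-cells $G\times e^{n+1}$, but a free cell has empty $P$-fixed set for every nontrivial subgroup $P$, so attaching free cells does not change $Y^P$ at all. Whatever $p'$-torsion sits in $\widetilde H_*(Y^P;\bbZ)$ before the surgery is still there afterward. To alter $Y^P$ you would have to attach cells of orbit type $G/H$ with $P$ subconjugate to $H$, and then the cross-prime coherence problem you flag becomes real and serious: such a cell also changes $Y^K$ for every $K$ subconjugate to $H$, and there is no evident induction that terminates in a finite-dimensional complex. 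Your sentence ``the genuine fixed sets $Y^P$ for $P\neq 1$ are modified only by cells built from the $G$-orbit structure lying over them'' seems to acknowledge this, but it directly contradicts the earlier claim that free cells suffice.

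The paper avoids this entirely by a different architecture. It works one prime at a time: for each $p\mid |G|$ it builds (using that the constant functor $\un{\Zp}$ has finite projective dimension over the orbit category for the family $\cF_p$, an Eilenberg swindle, and a geometric realization step) a finite-dimensional complex $X_p$ with isotropy only in $\cF_p$ and with every $X_p^P$ already $\Zp$-acyclic. It then takes the \emph{join} $X=\bigast_p X_p$. For a nontrivial $p$-subgroup $P$ and any $q\neq p$ one has $X_q^P=\emptyset$ (since $X_q$ has only $q$-power isotropy), hence $X^P=X_p^P$, and the $\Zp$-acyclicity of fixed sets is inherited with no interference between primes. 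The join is not yet contractible, but its only reduced homology is $\bigotimes_p H_{n_p-1}(X_p)$, which is torsion of order prime to $|G|$; this is killed by attaching free $G$-cells, and \emph{here} free-cell attachment is legitimate because only the ambient space, not any nontrivial fixed set, needs to be repaired.
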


There is a similar theorem by Leary and Nucinkis \cite[Proposition 3.1]{Leary:2010} for infinite groups acting on contractible complexes, which implies in particular that for a finite group $G$, there is a finite-dimensional contractible $G$-CW-complex $X$ with isotropy in $\cP$. But this contractible complex  is constructed using a mapping telescope, and the fixed point subspaces  are $\bbF_p$-acyclic but  do not have finitely generated $\Zp$-homology.

Let $\cF_p$ denote the family of all $p$-subgroups of $G$. The family $\cP$ is the union of families $\cF_p$ over all over all primes $p$ dividing the order of $G$. To prove Theorem \ref{thm:contractible}, we  first prove the following result.

\begin{proposition}\label{pro:p-acyclic}
Let $G$ be a finite group and $p$ be a prime such that $p \mid |G|$. Then, there exists a finite-dimensional $G$-CW-complex $X$, with isotropy in $\cF_p$, such that 
for every $p$-subgroup $P \leq G$, the fixed point subspace $X^P$ is $\Zp$-acyclic.
\end{proposition}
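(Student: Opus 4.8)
The plan is to obtain $X$ by attaching free $G$-cells to a highly connected model space, exploiting the fact that the relevant Bredon homology module is projective over the $\Zp$-linearized orbit category. Since we only need $X$ to be finite-\emph{dimensional} (not a finite complex), an Eilenberg swindle is available and the usual finiteness obstruction plays no role.

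\textbf{Step 1 (a highly connected model).} Fix a Sylow $p$-subgroup $G_p$ and an integer $n\ge 3$, and let $Y=(G/G_p)^{*n}$ be the $n$-fold join. Then $Y$ is a finite $G$-CW-complex of dimension $n-1$ with isotropy in $\cF_p$: the stabilizer of a point of a join is the intersection of the stabilizers of its coordinates, hence an intersection of conjugates of $G_p$. For every $p$-subgroup $P$ the set $(G/G_p)^P$ is non-empty by Sylow's theorem, so $Y^P=\big((G/G_p)^P\big)^{*n}$ is a non-empty, connected, $(n-2)$-connected finite complex whose reduced integral homology is free abelian and concentrated in degree $n-1$. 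Writing $R=\Zp\Or_{\cF_p}G$ for the $\Zp$-linearized orbit category, we conclude that the Bredon homology module $\underline M:=\underline H_{n-1}(Y;\Zp)$ (the functor $G/P\mapsto\widetilde H_{n-1}(Y^P;\Zp)$) is finitely generated, that $\underline H_0(Y;\Zp)=\underline\Zp$ is the constant functor, and that $\underline H_i(Y;\Zp)=0$ otherwise.

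\textbf{Step 2 ($\underline M$ is projective).} This is the heart of the matter. First, $\underline\Zp$ is projective over $R$. Restricting to $G_p$, the category $\Or_{\cF_p}G_p$ is the orbit category of the $p$-group $G_p$ with respect to the family of \emph{all} of its subgroups, which has a terminal object $G_p/G_p$; hence $\Res^G_{G_p}\underline\Zp=\underline\Zp$ is the representable (free) module on that object. Since $[G:G_p]$ is a unit in $\Zp$, the standard transfer exhibits $\underline\Zp$ as a direct summand of $\Ind^G_{G_p}\Res^G_{G_p}\underline\Zp=\Ind^G_{G_p}\underline\Zp$, and induction carries the representable module on $G_p/Q$ to the representable module on $G/Q$; so $\underline\Zp$ is a summand of a free $R$-module, hence projective. (The $p$-local nature of this is visible concretely in the fact that $|(G/G_p)^P|$ is prime to $p$ for every $p$-subgroup $P$, because a $p$-subgroup lies in $\equiv 1\pmod p$ Sylow $p$-subgroups of $G$.) Now the cellular Bredon chain complex of $Y$ is a finite complex of finitely generated free $R$-modules realizing the exact sequences
\[
0\to\underline M\to\underline C_{n-1}\to\cdots\to\underline C_1\to\ker\varepsilon\to 0,\qquad
0\to\ker\varepsilon\to\underline C_0\xrightarrow{\ \varepsilon\ }\underline\Zp\to 0,
\]
with $\varepsilon$ the augmentation. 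As $\underline\Zp$ is projective the second sequence splits, so $\ker\varepsilon$ is projective; moving up the first sequence (a syzygy of a projective is projective) shows that $\underline M$ is projective.

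\textbf{Step 3 (killing $\underline M$) and conclusion.} Being finitely generated and projective, $\underline M$ is a direct summand of a finitely generated free $R$-module, so the Eilenberg swindle provides a length-one resolution $0\to\underline F_1\to\underline F_0\xrightarrow{\pi}\underline M\to 0$ by free $R$-modules of countably infinite rank (take $\underline F_0=\underline M\oplus\underline F$ with $\underline F$ free and $\underline M\oplus\underline F$ free, $\pi$ the projection, $\underline F_1=\ker\pi\cong\underline F$). Attach free $G$-cells of dimension $n$, one for each element of a free basis of $\underline F_0$ (so with orbit types $G/Q$, $Q\in\cF_p$, and possibly infinitely many), along maps representing the classes $\pi(\text{generator})\in\underline M(G/Q)=\widetilde H_{n-1}(Y^Q)\cong\pi_{n-1}(Y^Q)$ (Hurewicz applies since $Y^Q$ is $(n-2)$-connected and $n-1\ge 2$). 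Attaching top-dimensional cells changes Bredon homology only in degrees $n-1$ and $n$, so the resulting complex $X_1$ has $\underline H_{n-1}(X_1;\Zp)=\operatorname{coker}\pi=0$, $\underline H_n(X_1;\Zp)=\ker\pi=\underline F_1$ (free), $\underline H_0=\underline\Zp$, and all other groups zero. Then $X_1^Q$ is $(n-1)$-connected, so by Hurewicz again we may attach free $G$-cells of dimension $n+1$, indexed by a free basis of $\underline F_1$, realizing the identity of $\underline H_n(X_1;\Zp)=\underline F_1$; the result $X:=X_2$ has $\underline H_n(X;\Zp)=0=\underline H_{n+1}(X;\Zp)$ and still $\underline H_0(X;\Zp)=\underline\Zp$, everything else vanishing. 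Thus $X$ is a $G$-CW-complex of dimension $n+1$ with isotropy in $\cF_p$ and $\underline H_*(X;\Zp)=\underline\Zp$ concentrated in degree $0$; equivalently $H_*(X^P;\Zp)\cong H_*(\mathrm{pt};\Zp)$ for every $p$-subgroup $P$, i.e. each $X^P$ is $\Zp$-acyclic.

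\textbf{Main obstacle.} The substantial point is Step 2 — recognizing $\underline M$ (equivalently $\underline\Zp$) as a projective module over $\Zp\Or_{\cF_p}G$, which rests on the $p$-local transfer from the Sylow subgroup together with the fact that $p$-subgroups lie in $\equiv 1\pmod p$ Sylow $p$-subgroups. The only other care needed is the bookkeeping in Step 3, namely that attaching these (possibly infinitely many) cells of a fixed dimension realizes the prescribed maps on Bredon homology; this is routine equivariant cell-attachment given the connectivity arranged in Step 1.
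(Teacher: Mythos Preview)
Your Step 2 contains a genuine error: the constant functor $\underline{\bbZ_{(p)}}$ is \emph{not} projective over $\bbZ_{(p)}\Or_{\cF_p}G$ in general, and the transfer argument you invoke does not go through for orbit-category modules. Unlike for group rings, there is no natural transformation $\id \to \Ind^G_{G_p}\Res^G_{G_p}$ splitting the counit, because over the orbit category induction (left Kan extension) and coinduction differ and no Mackey/Frobenius structure is available. Concretely, take $G=S_3$ and $p=2$, so $G_2=C_2$ with $N_G(C_2)=C_2$. Then $\Ind^G_{G_2}\Res^G_{G_2}\underline{\bbZ_{(2)}}$ is the free module $F_{S_3/C_2}$. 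Any natural section $s\colon \underline{\bbZ_{(2)}}\to F_{S_3/C_2}$ of the augmentation must have $s_{C_2}(1)=1\in\bbZ_{(2)}[W_G(C_2)]=\bbZ_{(2)}$, and then naturality along each of the three distinct morphisms $\varphi\colon S_3/1\to S_3/C_2$ forces $s_1(1)=\varphi\in\bbZ_{(2)}[S_3/C_2]$, a contradiction. One checks that $\underline{\bbZ_{(2)}}$ has projective dimension exactly~$1$ here. (Your parenthetical observation that $|(G/G_p)^P|\equiv 1\pmod p$ yields a splitting at each object separately, but not a natural one.)

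The paper uses instead the weaker fact that $\underline{\bbZ_{(p)}}$ has \emph{finite} projective dimension $d$ over $\bbZ_{(p)}\Or_{\cF_p}G$ (Lemma~\ref{lem:kernel}, quoting \cite[Cor.~3.15]{hpy1}). Your argument is easily repaired with this input: take $n\geq\max(3,d)$ in Step~1; then your $\underline M$ is the $n$-th syzygy of $\underline{\bbZ_{(p)}}$ via the Bredon chain complex of $(G/G_p)^{*n}$, hence projective, and Steps~1 and~3 go through essentially unchanged. With that fix your route is close to the paper's --- you start from the explicit finite model $(G/G_p)^{*n}$ rather than truncating a chain model for $E_{\cF_p}G$ --- but the algebraic core (projectivity of a high syzygy of $\underline{\bbZ_{(p)}}$, Eilenberg swindle, geometric realization by cell attachment) is the same.
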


A finite-dimensional $\bbF_p$-acyclic complex with $p$-subgroup isotropy is constructed in \cite[Theorem 2.14]{jackowski-mcclure-oliver1}. But this construction also uses a mapping telescope so it does not have finitely generated $\bbZ_{(p)}$-homology.

The construction we propose uses some of our earlier methods for constructing $G$-CW-complexes. In particular, we use chain complexes over the orbit category.  Recall that the orbit category $\Gamma _G:=\Or _{\cH} G$ over a family $\cH$ is the category with objects $G/H$, where $H\in \cH$, and whose morphisms are given by $G$-maps $G/H \to G/K$. Given a commutative ring $R$ with unity, an $R\G _G$-module is defined as contravariant functors from $\G_G$ to the abelian category of $R$-modules. For more details on $R\G_G$-modules we refer the reader to \cite{hpy1} (see also L\"uck \cite[\S 9, \S 17]{lueck3} and tom Dieck \cite[\S 10-11]{tomDieck2}).  

Recall that for every family $\cH$, there is a universal space $E_{\cH} G$ such that isotropy subgroups of $E_{\cH} G$ are in 
$\cH$ and for every $H \in \cH$, the fixed point set $(E_{\cH}G)^H$ is contractible.
If $\bC =\bC (E_{\cH} G ^? ; R)$ denote the cellular chain complex (over the orbit category) of the space $E_{\cH} G$, then $\bC$ is a chain complex of free $R\G_G$-modules. Note that the augmented complex 
$$\widetilde \bC : \quad \cdots \to \bC_n \maprt{\bd_n} \bC_{n-1} \maprt{\bd_{n-1}} \bC_{n-2} \to \cdots \to \bC _1 \to \bC _0 \to \un{R} \to 0,$$
 is an exact sequence, where $\un{R}$ denotes the constant functor. Hence $\bC$ is a projective resolution of $\un{R}$ as an $R\G _G$-module.

\begin{lemma}\label{lem:kernel} Let $\cH=\cF_p$, the family of all $p$-subgroups in $G$, and let $R=\Zp$. Then there is a positive integer $n$ such that $\ker \bd _{n-1}$ is a projective $R\G_G$-module.
\end{lemma}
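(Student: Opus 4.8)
The plan is to use the standard "Schanuel / syzygy" argument for projective resolutions over the orbit category, combined with a finiteness (cohomological dimension) input. First I would recall that $\bC = \bC(E_{\cF_p}G^?;\Zp)$ is a projective resolution of the constant functor $\un{\Zp}$ by free $\Zp\G_G$-modules, and that, because $G$ is finite, the family $\cF_p$ has only finitely many orbit types, so $\G_G = \Or_{\cF_p}G$ has finitely many objects; each chain module $\bC_i$ is finitely generated free. The key point is to produce a \emph{bound} on projective (equivalently, injective/flat) dimension: I would argue that the category $\Zp\G_G\text{-mod}$ has finite global dimension, or more precisely that $\un{\Zp}$ admits a finite-length projective resolution over $\Zp\G_G$. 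This follows from the fact that for each object $G/P$ ($P$ a $p$-group), the evaluation $\bC(?)(G/P)$ computes the $\Zp$-homology of $(E_{\cF_p}G)^P$, which is contractible, together with the fact that the "Bredon cohomological dimension" of a finite group over $\Zp$ with respect to $\cF_p$ is finite — one can cite L\"uck \cite{lueck3} or bound it directly by noting that $E_{\cF_p}G$ can be taken to be finite-dimensional (indeed a finite $G$-CW-complex, e.g.\ the join of enough copies of $G/G_p$-type complexes, or via \cite{jackowski-mcclure-oliver1}).

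Granting a finite bound $d$ on the projective dimension of $\un{\Zp}$ as a $\Zp\G_G$-module, the argument is then purely homological: set $n = d+1$ (or any integer exceeding the projective dimension) and let $Z = \ker \bd_{n-1} \subseteq \bC_{n-1}$. Truncating the augmented complex $\widetilde\bC$ gives an exact sequence
\[
0 \to Z \to \bC_{n-1} \to \bC_{n-2} \to \cdots \to \bC_0 \to \un{\Zp} \to 0
\]
of $\Zp\G_G$-modules in which $\bC_0,\dots,\bC_{n-1}$ are projective (free). Since $\un{\Zp}$ has projective dimension $\le n-1 < n$, a dimension shift (Schanuel's lemma applied inductively, or the standard fact that the $n$-th syzygy of a module of projective dimension $< n$ in a resolution by projectives is itself projective) shows that $Z$ is projective. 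I would spell this out as: by induction on $i$, the $i$-th syzygy $Z_i = \ker(\bC_{i-1}\to \bC_{i-2})$ satisfies $\mathrm{pd}(Z_i) = \max(\mathrm{pd}(\un{\Zp}) - i, 0)$, so once $i \ge \mathrm{pd}(\un{\Zp})$ the syzygy is projective; taking $n$ large enough that $n-1 \ge \mathrm{pd}(\un{\Zp})$ gives the claim.

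The main obstacle is the finiteness input: one must know that $\un{\Zp}$ genuinely has \emph{finite} projective dimension over $\Zp\G_G$, not merely a finite-length \emph{flat} resolution coming from a finite-dimensional model of $E_{\cF_p}G$. The subtlety is that a finite-dimensional $E_{\cF_p}G$ gives a free resolution that is bounded in length but whose kernel $Z$ a priori is only known to be a module with no higher cohomology against \emph{all} $\Zp\G_G$-modules — and over an orbit category of a finite group with $\Zp$ coefficients one does have that $\Zp\G_G$ is Noetherian and of finite global dimension (the evaluation functors detect projectivity, and each isotropy group ring $\Zp[W_G(P)]$, with $W_G(P) = N_G(P)/P$, has the relevant finiteness since $\Zp$ is a PID and we are looking at $p$-groups inside $W_G(P)$ — here one uses Rim's theorem / the structure of $\Zp[Q]$-lattices for $Q$ a $p$-group to handle projectivity over the fixed-point-functor pieces). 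I would therefore organize the finiteness step around the evaluation/fixed-point functors: show $\mathrm{Ext}^{*}_{\Zp\G_G}(Z, -)$ vanishes in positive degrees by reducing, via the standard filtration of $\Zp\G_G$-modules by "atoms" supported at single orbit types, to the vanishing of $\mathrm{Ext}$ over the rings $\Zp[W_G(P)]$, and there invoke that these have finite global dimension on the relevant category of lattices because $p \nmid |W_G(P)/O_p(W_G(P))|$ is \emph{not} assumed — so instead I would simply cite the finiteness of the Bredon $\Zp$-cohomological dimension of $G$ relative to $\cF_p$ (L\"uck, \cite{lueck3}, or the construction in \cite{jackowski-mcclure-oliver1} together with \cite{Leary:2010}) and feed that bound into the syzygy argument above.
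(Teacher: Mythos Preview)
Your approach is essentially the same as the paper's: both arguments reduce the lemma to the single input that $\un{\Zp}$ has finite projective dimension as a $\Zp\G_G$-module, and then apply the standard syzygy/dimension-shift argument to conclude that a sufficiently high kernel in the free resolution $\bC$ is projective. The paper obtains the finiteness input by citing \cite[Corollary~3.15]{hpy1}, whereas you propose citing L\"uck \cite{lueck3} (or a finite-dimensional model of $E_{\cF_p}G$); your extended discussion of the ``obstacle'' is unnecessary, since a finite-dimensional model of $E_{\cF_p}G$ already yields a finite-length free $\Zp\G_G$-resolution of $\un{\Zp}$ and hence finite projective dimension directly.
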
  

\begin{proof} This follows from the fact that $\un{R}$ has a finite projective dimension as an $R\G_G$-module (see \cite[Corollary 3.15]{hpy1}). Note that $n$ can be taken as any integer greater or equal to the homological dimension of $\un{R}$ as an $R\G_G$-module.
\end{proof}
 
Now we are ready to prove Proposition \ref{pro:p-acyclic}.
 
\begin{proof}[Proof of Proposition \ref{pro:p-acyclic}]
Let $\bC =\bC (E_{\cF_p} G ^? ; R)$ and $P=\ker \bd_{n-1}$ denote the projective $R\G_G$-module for a suitably large $n$ (as in Lemma \ref{lem:kernel}). To avoid problems in low dimensions, we also assume $n \geq 3$. Let $Q$ be a projective $R\G_G$-module such that $P\oplus Q $ is a free $\RG_G$-module. Using the Eilenberg swindle, we see that $\ker \bd_n \oplus F \cong F$, where $F=Q \oplus P \oplus Q \oplus \cdots$ is an infinitely generated free $\RG_G$-module. Adding the chain complex $$\cdots \to 0 \to F \maprt{id} F \to 0 \to \cdots $$ to the truncated complex, we obtain a complex of free $\RG_G$-modules
$$ 0 \to F \maprt{\varphi} \bC_{n-1} \oplus F \maprt{(\bd_{n-1}, 0)} \bC_{n-2} \to \cdots \to \bC_1 \to \bC_0 \to 0$$
where the map $\varphi$ is defined as the composition $F \cong \ker \bd_{n-1} \oplus F \hookrightarrow \bC_{n-1} \oplus F$.
Note that this chain complex can be lifted to a chain complex of free $\bbZ \G_G$-modules 
$$ \bD : 0 \to \bD_{n} \to \bD_{n-1} \to \bD_{n-2} \to \cdots \to \bD _0 \to 0$$
where the resulting complex has homology groups that are (possibly infinitely generated) abelian groups with torsion coprime to $p$. Because of the special structure of the original $\RG_G$-complex, we can assume that the lifting $\bD$ is of the form
$$ \bD: 0 \to \widehat F \maprt{\widehat \varphi} \widehat \bC_{n-1}  \oplus \widehat F \maprt{(\bd_{n-1}, 0)}\widehat \bC_{n-2} \to \cdots \to \widehat \bC_1 \to \widehat \bC_0 \to 0$$
where $\widehat \bC_i=\bC_i (E_{\cF_p } G ; \bbZ )$ and $\widehat F$ is  a free $\bbZ \G_G$-module such that $\widehat F \otimes R \cong F$. 

The map $\widehat \varphi$ is obtained as follows: let $\{e_i\}$ be a basis for $F$ as an $R\G_G$-module. For each $i$, there is an integer $s_i$, coprime to $p$, such that $\varphi( s_i e_i)\in  \widehat \bC_{n-1}  \oplus \widehat F$. Let $\widehat F$ be the $\bbZ \G_G$-submodule of $F$ generated by $\{ s_i e_i\}$ and $\widehat \varphi$ be the map induced by $\varphi$. It is easy to see from this that the reduced homology of this complex $\bD$ is zero except at dimension $n-1$ and  $H_{n-1} (\bD)$ is a torsion abelian group with torsion coprime to $p$ (possibly infinitely generated). 

Note that we can assume that $\bD$ is partially realized by the $(n-1)$-skeleton of the complex $E_{\cF_p } G$. In fact, by attaching orbits of cells to $E_{\cF_p} G$ with $p$-subgroup isotropy, we can assume that $\bD$ is realized for dimensions $\leq n-1$. The last realization step can be done using \cite[Lemma 8.1]{hpy1}. Note that for this step we need to assume $n\geq 3$. 

Hence, we can conclude that for every finite group $G$, there is a finite-dimensional $G$-CW-complex $X$ with isotropy in $\cF_p$, such that \begin{enumerate}
\item $X$ is $n$-dimensional and $(n-2)$-connected where $n=\max \{ 3, \hdim \un{R}\}$; \item for each $P \in \cF_p$, the only nontrivial reduced homology of the fixed point subspace $X^P$ is at dimension $n-1$ and $H_{n-1}(X)$ is a torsion abelian group with torsion coprime to $p$. \end{enumerate}
In particular, for every $P \in \cF_p$, the fixed point subspace $X^P$ is $\Zp$-acyclic. Hence this completes the proof of Proposition \ref{pro:p-acyclic}.
\end{proof}

\begin{proof}[Proof of Theorem \ref{thm:contractible}]
In Proposition \ref{pro:p-acyclic}
we have constructed  a $\Zp$-acyclic complex $X_p$ of dimension $n_p$,  for each $p \mid |G|$. Let $X$ be the join $\bigast X_p$ of all the $X_p$'s over all $p \mid |G|$. The reduced homology of $X$ is nonzero only at dimension $n-1$, where $n=\prod n_p$, and
 $$H_{n-1} (X) \cong \bigotimes _{p \mid |G|} H_{n_p-1} (X_p).$$ 
 Since $H_{n_p-1}( X_p)$ is a torsion group coprime to $p$, the homology group $H_{n-1}(X)$ is a torsion abelian group with torsion coprime to $|G|$. Such an abelian group has two step free resolution. To see this, note that as a $\bbZ G$-module $N=H_{n-1} (X)$ is cohomologically trivial since it is a torsion group with torsion coprime to the order of the group. If we take a free cover of $N$, then we get an exact sequence of the form $$0 \to M \to F_0 \to N \to 0.$$ Note that the module $M$ is both torsion free and cohomologically trivial. Hence by \cite[Theorem 8.10, p.~152]{brown1}, $M$ is a projective module. By an Eilenberg swindle argument, we can add free modules to $M$ and $F_0$ to obtain a two step free resolution for $N$. This means, we can kill the last homology group at dimension $n-1$ by adding free orbits of cells. By taking further joins if necessary, we an assume that $X$ is simply connected, hence the resulting $G$-CW-complex is contractible. For each $1 \neq P \in \cF_p$, we have $H_* (X^P ; \Zp) \cong H_* (X_p ^P; \Zp)\cong H_* (pt; \Zp)$, so the fixed subspace $X^P$ is $\Zp$-acyclic for every $P \in \cF_p$.
\end{proof}

\section{$G$-equivariant fibrations}
\label{sect:fibration}

Let $G$ be a finite group. In this section, we first give some necessary definitions related to $G$-fibrations and then construct a $G$-fibration over a contractible base space with prime power isotropy.
For more details on this material we refer the reader to 
\cite[Section 2]{unlu-yalcin3} and to some earlier references mentioned in that paper.

\begin{definition}\label{defn:Gfibration} A $G$-fibration is a $G$-map $q\colon E \to B$ which satisfies the following homotopy lifting property for every $G$-space $X$: given a commuting diagram of $G$-maps 
$$\xymatrix{
X \times \{ 0\} \ar[d] \ar[r]^-{h}
& E \ar[d]^{q}  \\
X \times I \ar[r]^-{H}
&  B,}\\ $$
there exists a $G$-map $\widetilde H\colon  X\times I \to E$ such that $\widetilde H |_{X\times \{0\}}=h$ and $p \circ \widetilde H =H$.
\end{definition}

If $p \colon E\to B$ is a $G$-fibration, then for every $x\in B$, the isotropy subgroup $G_x \leq G$  acts on the fiber space $F_x=q^{-1} (x)$. So, $F_x$ is a $G_x$-space. 

\begin{definition}\label{def:fibertype} Let $\cH$ be a family of subgroups of $G$ and $\{ F_H \}$ denote a family of $H$-spaces over all $H \in \cH$. If for every $x \in B$, the isotropy subgroup $G_x$ lies in $\cH$ and the fiber space $F_x$ is $G_x$-homotopy equivalent to $F_{G_x}$, then $p \colon E \to B$ is said to have {\it fiber type} $\{ F_H \}$. 
\end{definition}

Here and throughout the paper \emph{a family of subgroups} always means a collection of subgroups which are closed under conjugation and taking subgroups. In general a $G$-fibration does not have to satisfy the above criteria: for $x, y \in B$ with $G_{x}=G_{y}=H$, it may happen that $F_{x}$ and $F_{y}$ are not $H$-homotopy equivalent. Throughout the paper we only consider $G$-fibrations which do have a fiber type. 

We will construct $G$-equivariant spherical fibrations whose fiber type is given by a family of linear $G$-spheres. To start we assume that we are given a compatible family of representations.

\begin{definition} Let $\cH$ be a family of subgroups of $G$ and $\bV=\{ V_H \}$ denote a family of complex $H$-representations defined over $H \in \cH$. We say $\bV$ is a \emph{compatible family of representations} if $f^* (V_K) \cong V_H$ for every $G$-map $f\colon  G/H \to G/K$. In this case, we call $\bV$ an $\cH$-representation (see \cite[Definition 3.1]{h-yalcin2}).
\end{definition}

Note that since $1\in \cH$, all the $H$-representations $V_H$ in $\bV$ have the same dimension. We call this common dimension the dimension of $\bV$. We have the following result as a main tool for constructions of $G$-fibrations which was first proved by 
Klaus  \cite[Proposition 2.7]{klaus1}].

\begin{theorem}\label{pro:fibconst} Let $G$ be a finite group, with $\cH$ a family of subgroups. 
Let $B$ be a finite-dimensional $G$-CW-complex such that  the isotropy subgroup $G_x$ lies in $\cH$, for every $x\in B$. Given a compatible family of complex representations $\bV=\{ V_H\}$ defined over $\cH$, there exists an integer $k \geq 1$ and a $G$-equivariant spherical fibration $q \colon E\to B$ such that the fiber type of $q$ is $\{S(V_H ^{\oplus k} ) \}$.
\end{theorem}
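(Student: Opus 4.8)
The plan is to build the $G$-fibration $q\colon E \to B$ by induction over the skeleta of $B$, realizing on each orbit $G/H \subset B$ a fiber that is $G_x$-homotopy equivalent to $S(V_H^{\oplus k})$, and using equivariant obstruction theory to extend the fibration and patch the fiber homotopy equivalences across attaching maps. First I would record the local models: for each $H \in \cH$ the unit sphere $S(V_H)$ is a linear $H$-sphere, and the compatibility hypothesis $f^*(V_K) \cong V_H$ for every $G$-map $f\colon G/H \to G/K$ ensures these local models are ``coherent'' in the sense needed to glue — in particular for $H \leq K$ the restriction $\Res^K_H V_K \cong V_H$, so on an orbit $G/H$ sitting in the boundary of a cell $G/K \times D^m$ the prescribed fiber types agree after restriction.

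The core construction is the standard device for producing spherical $G$-fibrations out of a compatible family of representations: over the $m$-skeleton $B^{(m)}$, suppose inductively we have a $G$-spherical fibration $q_m\colon E_m \to B^{(m)}$ with fiber type $\{S(V_H^{\oplus k_m})\}$. For each equivariant cell $G/K \times D^m$ attached along $\varphi\colon G/K \times S^{m-1} \to B^{(m-1)}$, the pulled-back fibration over $G/K \times S^{m-1}$ is classified, on the $K$-fixed data, by a map $S^{m-1} \to B\mathrm{Aut}_K(S(V_K^{\oplus k_m}))$, i.e.\ by an element of $\pi_{m-1}$ of the classifying space of the topological monoid of $K$-equivariant self-homotopy-equivalences of the linear sphere $S(V_K^{\oplus k_m})$. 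These obstruction groups are finite (the relevant Bredon cohomology groups of $B$ with coefficients in the homotopy groups of the fiber automorphism spaces are finitely generated and, since $B$ is finite-dimensional with a finite number of orbit types, the total obstruction is controlled). To kill an obstruction I would pass from $V_H$ to $V_H^{\oplus k}$ for a larger $k$: stabilizing the fiber by taking joins $S(V_H^{\oplus k}) = S(V_H)^{* k}$ increases the connectivity of the fiber automorphism spaces relative to the dimension of $B$, so after finitely many stabilizations (a single uniform $k$ suffices since $\dim B < \infty$) every obstruction to extending the fibration over the next skeleton, and every obstruction to rigidifying the fiber homotopy type to exactly $S(V_H^{\oplus k})$, vanishes. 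This is where Klaus's argument \cite{klaus1} does the real work, and I would follow it.

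Concretely the induction step has two parts: (i) \emph{extend} the fibration $q_m$ over the cells of dimension $m+1$ — possible after replacing $k_m$ by a multiple — and (ii) \emph{normalize} the fiber type, i.e.\ choose $G_x$-homotopy equivalences $q^{-1}(x) \simeq S(V_{G_x}^{\oplus k})$ compatibly as $x$ varies, which is again an obstruction problem in Bredon cohomology with coefficients in homotopy groups of fiber self-equivalences, solved by the same stabilization. Running the induction up to $\dim B$ (finite) produces the desired $q\colon E \to B$ with fiber type $\{S(V_H^{\oplus k})\}$ for a single $k \geq 1$.

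The main obstacle I anticipate is controlling the equivariant classifying spaces $B\mathrm{Aut}_H(S(V_H^{\oplus k}))$ and their homotopy groups well enough to see that (a) all obstructions lie in finite groups and (b) a uniform $k$ kills them simultaneously over all orbit types and all skeleta — this requires the connectivity estimate that $\mathrm{Aut}_H(S(W^{\oplus k}))$ becomes highly connected, relative to $\dim B$, as $k \to \infty$, together with the fact that joining with $S(V_H)$ induces the stabilization maps compatibly with the $G$-action and with restriction along $G$-maps $G/H \to G/K$. Since the statement is attributed to Klaus and only invoked as a tool, I would cite \cite[Proposition 2.7]{klaus1} for the construction and merely indicate this outline, rather than reprove it in full.
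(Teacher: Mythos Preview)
Your outline is sound and matches the standard construction: build over skeleta, classify the extension problem for each cell by elements of $\pi_{m-1}$ of the classifying space of $H$-equivariant self-equivalences of the fiber, and stabilize by taking joins $S(V_H^{\oplus k})$ to push the connectivity of these automorphism spaces above $\dim B$. The paper, however, gives no argument at all: its entire proof is a one-line citation to \cite[Proposition 4.3]{unlu-yalcin3} (a later reference than Klaus, stated in the form needed here). So there is nothing substantive to compare — your sketch is strictly more informative than what the paper provides, and your decision to cite the result rather than reprove it is exactly what the authors do; just be aware they point to \"Unl\"u--Yal\c{c}\i n rather than Klaus for the precise formulation.
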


\begin{proof} See \cite[Proposition 4.3]{unlu-yalcin3}.
\end{proof}

We will apply this theorem to construct a $G$-fibration over a base space with prime power isotropy. As before, let $\cP$ denote the family of all subgroups of $G$ with prime power order, and $\cF_p$ denote the family of all $p$-subgroups of $G$.  

\begin{lemma}\label{lem:fibertype} Let $G$ be a finite group and $\{ V_p \}$ be a $G$-invariant family of Sylow  representations (see Definition \textup{\ref{def:Ginvariant}}). For each $H \in \cF_p$, let $V_H$ be the representation obtained from $V_p$ via the map $$ H \maprt{c^g} gHg^{-1} \hookrightarrow G_p$$
where $c^g$ denotes the conjugation map $h \mapsto ghg^{-1}$ and the second map is the inclusion map (the element $g\in G$ is chosen arbitrarily such that $gHg^{-1} \leq G_p$). Then the collection $\bV=(V_H)_{H \in \cP}$ is a compatible family of representations over $\cP$.
\end{lemma}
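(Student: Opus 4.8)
The plan is to show that the assignment $H \mapsto V_H$ is well-defined (independent of the choice of $g$) and then verify the compatibility condition $f^*(V_K) \cong V_H$ for every $G$-map $f\colon G/H \to G/K$ with $H, K \in \cP$. First I would establish well-definedness: if $g_1 H g_1^{-1} \leq G_p$ and $g_2 H g_2^{-1} \leq G_p$, then setting $x = g_2 g_1^{-1}$ we have $x (g_1 H g_1^{-1}) x^{-1} = g_2 H g_2^{-1} \leq G_p$, so conjugation by $x$ carries one Sylow-conjugate of $H$ to another inside $G_p$. The two candidate representations for $V_H$ are the pullbacks of $V_p$ along $c^{g_1}$ and $c^{g_2}$; comparing their characters at an element $h \in H$ gives $\chi_p(g_1 h g_1^{-1})$ versus $\chi_p(g_2 h g_2^{-1})$, and since $g_2 h g_2^{-1} = x(g_1 h g_1^{-1})x^{-1}$ with both $g_1 h g_1^{-1}$ and $g_2 h g_2^{-1}$ lying in $G_p$, the fusion-respecting hypothesis (Definition \ref{def:Ginvariant}(i)) gives equality of characters, hence $c^{g_1*}(V_p) \cong c^{g_2*}(V_p)$ as $H$-representations.

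Next I would handle the compatibility condition. Every $G$-map $f\colon G/H \to G/K$ is given by $f(eH) = aK$ for some $a \in G$ with $a^{-1} H a \leq K$, and the induced map on representations sends a $K$-representation $W$ to the $H$-representation with underlying space $W$ and $H$-action $h \cdot w = (a^{-1} h a) w$; in other words $f^*(W)$ is the pullback of $W$ along the homomorphism $c^{a}\colon H \to a^{-1}Ha \hookrightarrow K$ given by $h \mapsto a^{-1} h a$. So I must show $(c^{a})^*(V_K) \cong V_H$. Now $H, K \in \cP$, so $H$ is a $p$-group for exactly one prime $p$ (or trivial), and since $a^{-1} H a \leq K$, the group $K$ contains a nontrivial $p$-subgroup, forcing $K$ to be a $p$-group for the same $p$ when $H \neq 1$ (the trivial cases being immediate since all $V_H$ have the same dimension $n$ and a unique trivial representation up to iso). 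Choose $g_K \in G$ with $g_K K g_K^{-1} \leq G_p$, so $V_K = (c^{g_K})^*(V_p)$ by definition. Then $(c^a)^*(V_K) = (c^a)^*(c^{g_K})^*(V_p) = (c^{g_K a})^*(V_p)$, which is the pullback of $V_p$ along $h \mapsto g_K a h a^{-1} g_K^{-1}$. But $g_K a H a^{-1} g_K^{-1} \leq g_K K g_K^{-1} \leq G_p$, so $g_K a$ is a valid choice of conjugating element for $H$, and therefore $(c^{g_K a})^*(V_p) \cong V_H$ by the well-definedness established above. This gives $f^*(V_K) \cong V_H$, as required.

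The main obstacle I anticipate is bookkeeping with the direction of conjugation and the precise meaning of $f^*$ for the orbit category: one must be careful that $f\colon G/H \to G/K$ determined by $a$ corresponds to the subgroup inclusion $a^{-1}Ha \leq K$ (not $aHa^{-1} \leq K$) and that pullback of representations composes contravariantly in the expected way, so that $(c^a)^* \circ (c^{g_K})^* = (c^{g_K a})^*$ with the composite homomorphism being $h \mapsto g_K a h a^{-1} g_K^{-1}$ up to the relevant inclusions. Once the conventions are pinned down, the argument is a direct chase using only Definition \ref{def:Ginvariant}(i) for well-definedness and the observation that prime-power subgroups related by subconjugacy share the same prime; condition (ii) of Definition \ref{def:Ginvariant} ensures the common dimension $n$ so that $\bV$ is genuinely a $\cP$-representation in the sense required by Theorem \ref{pro:fibconst}. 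I would also remark briefly that the constant value of $\dim V_H = n$ handles the case $H = 1$ uniformly, since any two $n$-dimensional representations of the trivial group are isomorphic.
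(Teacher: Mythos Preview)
Your proposal is correct and follows essentially the same approach as the paper: both arguments reduce to a character comparison using the fusion-respecting hypothesis of Definition~\ref{def:Ginvariant}(i). The paper's proof is terser---it checks only the key case of $H,K\leq G_p$ with $H=gKg^{-1}$ and notes that well-definedness follows from the same computation---whereas you spell out well-definedness, the general subconjugacy case, and the trivial-subgroup case separately; your flagged concern about the direction of conjugation is real (your use of $c^a$ switches convention midway), but once the conventions are fixed the argument goes through exactly as you indicate.
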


\begin{proof} We only need to check that when $H, K \leq G_p$ are
such that $H=gKg^{-1}$ for some $g\in G$, then $(c^g)^* (V_H )\cong V_K$ as $K$-representations. Note that the isomorphism holds because for every $x \in K$, we have $$(c^g)^* (\chi _p)(x)=\chi_p(gxg^{-1})=\chi _p (x)$$ by the character formula given in Definition \ref{def:Ginvariant}. This also shows that the compatible family $\{ V_H \}$ does not depend on the elements $g \in G$ chosen to define it (up to isomorphism).
\end{proof}

Suppose that we are given  a $G$-invariant family of Sylow representations $\{ V_p \}$. Then by Lemma \ref{lem:fibertype}, this gives a compatible family of representations $\bV=(V_H)$. Let $B$ be the $G$-CW-complex constructed in Proposition \ref{thm:contractible}. By applying Proposition \ref{pro:fibconst} to the base space $B$ with family $\bV$, we obtain a $G$-equivariant spherical fibration $q \colon  E \to B$  with fiber type $\{ S(V_H ^{\oplus k} )\}_{H \in \cP} $ for some $k \geq 1$. 

The total space $E$ satisfies the certain properties which will be used in our construction of finite homotopy $G$-spheres.  

\begin{definition}\label{p-localEq}
A $G$-map $f \colon  X \to Y$ between two $G$-spaces is called a \emph{$p$-local $G$-equivalence} if for every subgroup $H \leq G$, the map on fixed point sets $f^H\colon X^H \to Y^H$ induces an isomorphism on $\bbZ_{(p)}$-homology.

We say that two $G$-spaces $X$ and $Y$ are \emph{$p$-locally $G$-equivalent} if for some $k$ there is are $G$-spaces $\{X_i\}$ and $\{Y_i\}$, for $0\leq i \leq k$, such that $X_0 =X$ and $Y_k = Y$, together with two families of $G$-maps $ X_i \to Y_i$ for $i \geq 0$,  and $ X_i \to Y_{i-1}$ for $i >0$, which are $p$-local $G$-equivalences. 
\end{definition}
  
Now we prove the main result of this section.

\begin{proposition}\label{pro:targetspace} Let $G$ be a finite group, and let  $\{ V_p \}$  be a $G$-invariant family of Sylow  representations. Then 
there exists an integer $k\geq 1$ and a finite-dimensional $G$-CW-complex $E$, with isotropy in $ \cP$, satisfying the following properties:
\begin{enumerate}
\item $E$ is homotopy equivalent to a sphere $S^{2kn-1}$ where $n=\dim V_p$;
\item For every $H \in \cP$, the fixed point subspace $E^H$ is simply connected;
\item  For every $p \mid |G|$, there is a $G_p$-map
$j_p\colon S(V_p ^{\oplus k} ) \to E$ which is a $p$-local $G_p$-equivalence.
\end{enumerate}
\end{proposition}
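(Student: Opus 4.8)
We already have all the ingredients assembled in the preceding paragraph: the contractible base space $B$ from Theorem~\ref{thm:contractible} (with isotropy in $\cP$ and $\Zp$-acyclic $p$-subgroup fixed sets), the compatible family $\bV = (V_H)_{H\in\cP}$ from Lemma~\ref{lem:fibertype}, and the $G$-equivariant spherical fibration $q\colon E\to B$ with fiber type $\{S(V_H^{\oplus k})\}_{H\in\cP}$ produced by Theorem~\ref{pro:fibconst}. The task is to verify that this $E$ has properties (1)--(3). Since $q$ is a spherical fibration, each fiber $F_x = q^{-1}(x)$ is a $G_x$-CW-complex homotopy equivalent to $S(V_{G_x}^{\oplus k})$, which is a sphere of dimension $2kn-1$; I should first note that we may take $E$ to be a finite-dimensional $G$-CW-complex (the fibration construction of \cite{unlu-yalcin3} produces this, $B$ being finite-dimensional with finitely many orbit types).

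\textbf{Property (1).} The idea is to compute the (non-equivariant) homotopy type of $E$ via the fibration $S^{2kn-1}\to E\to B$ with $B$ contractible. Since $B$ is a contractible $G$-CW-complex it is in particular contractible as a space, so the Serre spectral sequence (or the long exact homotopy sequence together with the fact that $\pi_1(B)=0$ acts trivially) collapses and gives $E\simeq S^{2kn-1}$. One should be slightly careful that $q$ is a genuine (Hurewicz, hence Serre) fibration after forgetting the $G$-action, which it is.

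\textbf{Property (2).} For $H\in\cP$, restrict $q$ to fixed points: $q^H\colon E^H\to B^H$. Taking $H$-fixed points of a $G$-fibration yields a fibration $q^H\colon E^H\to B^H$ whose fiber over $x\in B^H$ is $(q^{-1}(x))^H = F_x^H$, and since $F_x\simeq_{G_x} S(V_{G_x}^{\oplus k})$ we get $F_x^H \simeq S(V_H^{\oplus k})^H = S((V_H^{\oplus k})^H)$, a linear subsphere, which is simply connected provided its dimension is $\geq 2$; here $H$ has prime power order and $V_H$ arises from the Sylow representation $V_p$, so after passing to $V_H^{\oplus k}$ with $k$ large the fixed subspace is a sphere of dimension $\geq 2$ (one may enlarge $k$, or simply observe $\dim V_H^{\oplus k} = kn$ is large and the fixed-point dimension, being congruent to $-1$ mod $2$ and at least the dimension coming from the trivial summand, exceeds $1$; if $V_H^H$ could be $0$ we instead argue directly that $S((V_H^{\oplus k})^H)=\emptyset$ or $S^{2mk-1}$, and in the empty case $E^H=\emptyset$ which is vacuously ``simply connected'' in the sense needed, though more care is warranted). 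Since $B^H$ is $\Zp$-acyclic and, after the join/connectivity improvements built into Theorem~\ref{thm:contractible}, simply connected, the fibration $F_x^H \to E^H\to B^H$ gives $\pi_1(E^H)=0$ from the homotopy exact sequence. The key subtlety to get right is the simple-connectivity of $B^H$: Theorem~\ref{thm:contractible} as proved makes $B$ contractible (hence $B^{\{1\}}$ simply connected) and each $B^P$ is $\Zp$-acyclic, and the construction via joins can be arranged so that every $B^P$ is simply connected too (take enough joins); I would invoke that.

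\textbf{Property (3).} Pick a base vertex $x_0\in B^{G_p}$, which exists since $B$ is contractible and $G_p$ acts with $B^{G_p}$ nonempty (indeed $\Zp$-acyclic). The fiber $F_{x_0} = q^{-1}(x_0)$ is a $G_p$-space $G_p$-homotopy equivalent to $S(V_{G_p}^{\oplus k}) = S(V_p^{\oplus k})$; compose a $G_p$-homotopy equivalence $S(V_p^{\oplus k})\to F_{x_0}$ with the inclusion $F_{x_0}\hookrightarrow E$ to get $j_p\colon S(V_p^{\oplus k})\to E$. To see $j_p$ is a $p$-local $G_p$-equivalence, take $H\leq G_p$ and consider $j_p^H$: its target is $E^H$ and the inclusion $F_{x_0}^H\hookrightarrow E^H$ sits in the fibration $q^H\colon E^H\to B^H$ over the contractible-up-to-$\Zp$-homology, simply connected base $B^H$ (with $x_0\in B^H$ since $H\leq G_p$ fixes $x_0$). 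Because $B^H$ is $\Zp$-acyclic and simply connected, the inclusion of the fiber $F_{x_0}^H\hookrightarrow E^H$ induces an isomorphism on $\Zp$-homology (Serre spectral sequence with $\Zp$ coefficients: $E^2_{*,0} = H_*(B^H;\Zp)$ is concentrated in degree $0$, so the edge homomorphism $H_*(F_{x_0}^H;\Zp)\to H_*(E^H;\Zp)$ is an isomorphism; the $\pi_1$-action is trivial by simple-connectivity of $B^H$). Since $F_{x_0}^H \simeq S(V_p^{\oplus k})^H$ compatibly with $j_p$, this shows $j_p^H$ is a $\Zp$-homology isomorphism for every $H\leq G_p$, i.e. $j_p$ is a $p$-local $G_p$-equivalence.

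\textbf{Main obstacle.} The delicate point is not any single computation but the bookkeeping around fixed-point behavior: making sure each $B^H$ ($H\in\cP$) is simultaneously $\Zp$-acyclic \emph{and} simply connected so that the Serre spectral sequence argument applies on fixed points, and handling the degenerate case where a fixed subspace of a linear sphere might be empty or a low-dimensional sphere (affecting the ``simply connected'' claim in (2)). Both are resolved by taking sufficiently many joins in the construction of $B$ and by enlarging $k$; I would make these adjustments explicit. A secondary technical point is confirming that ``taking $H$-fixed points'' of the $G$-fibration $q$ again yields a fibration with the expected fibers — this is standard for $G$-fibrations but should be cited (e.g. from \cite{unlu-yalcin3}).
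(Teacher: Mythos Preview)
Your proposal is correct and follows essentially the same approach as the paper's proof: construct $E$ as the total space of the spherical $G$-fibration $q\colon E\to B$ over the contractible base $B$ from Theorem~\ref{thm:contractible}, then verify (i) via contractibility of $B$, (ii) via the long exact homotopy sequence for $q^H$ after arranging both $B^H$ and the fiber $F^H$ to be simply connected, and (iii) via a Serre spectral sequence comparison for the inclusion of the fiber over a point $x\in B^{G_p}$. The paper makes precisely the adjustments you anticipate---replacing $B$ by $B\ast B$ to force $B^H$ simply connected, and enlarging $k$ to force $F^H$ simply connected---and uses the same spectral sequence argument for the $\Zp$-homology isomorphism in (iii).
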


\begin{proof} Let $B$ be a contractible $G$-CW-complex as in 
Theorem \ref{thm:contractible}, 
and $E$ be the total space of a fibration $q\colon E \to B$ with fiber type $\{ S(V_H ^{\oplus k} )\}_{H \in \cP} $ for some $k \geq 1$. By construction of the $G$-fibration,  the total space 
$E$ is a $G$-homotopy equivalent to a finite-dimensional $G$-CW-complex (see \cite[Proposition 4.4]{unlu-yalcin3}). Since $B$ has isotropy in $\cP$, the total space $E$  has isotropy in $ \cP$. Since $B$ is contractible, $E$ is homotopy equivalent to $S^{2kn-1}$.

For every $H \leq G$, the induced map $q^H\colon  E^H \to B^H$ on fixed subspaces is a fibration with fiber type $F^H$. We can assume that for every $P \in \cF_p$, the fixed point subspace $B^H$ is simply connected (if not we can replace $B$ with $B*B$). We can also assume that the subspaces $F^H$ are simply connected by replacing $k$ with a larger integer if necessary. Using the long exact homotopy sequence for the fibration $F^H \to E^H \to B^H$, we obtain that $E^H$ is simply connected for every $H \in \cP$. 

For second statement, observe that for every $p \mid |G|$, the fixed point space $B^{G_p}$ is non-empty, by 
P.~A.~Smith Theory. If we take $x\in B^{G_p}$, then the inclusion map $i_x\colon  \{x\} \to B^{G_p}$ induces a $G_p$-map $j_x\colon F_x \to E$, where $F_x=q^{-1}(x)$. By the definition of fiber type, we have $F_x \simeq S(V_p ^{\oplus k})$ as a $G_p$-space. We define $j_p$ as the composite $S(V_p ^{\oplus k})\simeq F_x \maprt{j_x} E$ which is a $G_p$-map. For each subgroup $H \leq G_p$, we have a fibration diagram:
$$\xymatrix{F_x ^H \ar[d] \ar@{=}[r]&   F_x ^H \ar[d] \\
F_x ^H \ar[d]\ar[r]^{j_x ^H } & E^H \ar[d] \\ 
\{x \} \ar[r]^{i_x^H}& B^H.}$$ 
Since $i_x ^H$ induces a $\bbZ_{(p)}$-homology isomorphism, the map $j_x ^H$ also induces a $\bbZ_{(p)}$-homology isomorphism. This can be seen easily by a spectral sequence argument. Note that $B^H$ is simply connected, so the $E_2$-term of the Serre spectral sequence for the second fibration is of the form 
$E_2 ^{i,j}=H^i (B^H; H^j (F_x^H, \bbZ _{(p)}))$ with untwisted coefficients. 
By comparing two spectral sequences, we see that $j_x^H$ induces an isomorphism on $\bbZ_{(p)}$-homology. This shows that $j_p$ is a $p$-local $G_p$-equivalence. 
\end{proof}

\section{$p$-local $G$-CW-surgery}
\label{sect:plocal}

Let $G$ be a finite group, $\cP$ denote the family of subgroups of $G$ with prime power order, and $\{ V_p\}$ be a $G$-invariant family of Sylow representations $V_p\colon G_p \to U(n)$ over all primes $p$ dividing the order of $G$. In Section \ref{sect:fibration}, we proved that there is a finite-dimensional $G$-CW-complex $E$, with isotropy in $\cP$, homotopy equivalent to $S^{2kn-1}$ for some $k\geq 1$,  satisfying some further fixed point properties. 

To prove Theorem B we will need to replace $E$ with a finite $G$-CW-complex $X$ having properties similar to $E$, with possibly a larger $k\geq 1$. We will do this by applying the $G$-CW-surgery techniques introduced in \cite{Oliver:1982} to a particular $G$-map (see also \cite{Dieck:1982a}).

By part (iii) of Proposition \ref{pro:targetspace},  there is a $G_p$-map
$ j_p \colon  S(V_p ^{\oplus k} )\to E$ which induces a $\bbZ_{(p)}$-homology isomorphism on fixed subspaces,
 for every $p \mid |G|$. Using these maps we can define a $G$-map $$f_0\colon \coprod_{p \mid |G|} G \times _{G_p } S(V_p ^{\oplus k} )\to E$$ by taking $f_0 (g, x)=gj_p (x)$ for every $g \in G$ and $x \in S(V_p ^{\oplus k} )$. It is clear that $f_0$ is well-defined and it is a $G$-map, where the $G$-action on $G\times _{G_p} S(V_p ^{\oplus k} )$ is by left multiplication. We will apply $G$-CW-surgery methods to this map to convert it to a homotopy equivalence. 

The first step of this surgery method is to get a $p$-local homology equivalence on $H$-fixed subspaces for every nontrivial $p$-subgroup $H\leq G$. We will do this step-by-step by a downward induction starting 
from  Sylow $p$-subgroups. At a particular step $H$ we will need to attach cells to complete that step. The following proposition is the main result of this section and it states exactly what we will need to complete a particular step in the downward induction.

\begin{proposition}\label{pro:plocal}
Let $G$ be a finite group and $f \colon  X \to Y$ be a $G$-map between two simply connected $G$-CW-complexes, with isotropy subgroups in $\cF_p$, such that 
\begin{enumerate}
\item $X$ is a finite complex and $X^P$ is an odd-dimensional $\bZp$-homology sphere for every $p$-subgroup $1\neq P \leq G$;
\item $Y$ is a finite-dimensional complex with finitely generated $\bbZ_{(p)}$-homology; 
\item  The Euler characteristic $\sum _i \dim _{\bbQ} (-1)^i [H_i (Y; \bbQ)] =0 \in R_{\bbQ}(G)$, the rational representation ring of $G$.
\end{enumerate}
If for every $p$-subgroup $1\neq P \leq G$, the induced map $f^P\colon X^P\to Y^P$ on fixed point sets is a $\bbZ_{(p)}$-homology equivalence, then by attaching finitely many free $G$-orbits of cells to $X$, we can extend $f$ to a
$\bbZ_{(p)}$-homology equivalence $f' \colon  X' \to Y$. 
\end{proposition}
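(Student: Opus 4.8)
The plan is to convert $f$ to a $\bbZ_{(p)}$-homology equivalence by attaching free $G$-cells only, which automatically does not disturb the fixed-point data at nontrivial $p$-subgroups $P$ (since $(G/e)^P = \emptyset$ for $P \neq 1$). So the whole game is played over $\bbZ_{(p)}G$-modules, or equivalently (after restricting attention to the part of the orbit category supported at the trivial subgroup) over the group ring $\bbZ_{(p)}G$ itself. First I would replace $f$ by an inclusion, forming the mapping cylinder, so that we may study the pair $(Y, X)$ and its relative homology $H_*(Y, X; \bbZ_{(p)})$. The hypothesis that $f^P$ is a $\bbZ_{(p)}$-homology equivalence for every $1 \neq P$ says that this relative homology is \emph{$\bbZ_{(p)}$-free over the trivial subgroup}, i.e. each $H_i(Y,X;\bbZ_{(p)})$ is a module on which all nontrivial $p$-subgroups act with no fixed-point contribution — more precisely, it is a module induced up from, or at least cohomologically trivial with respect to, $p$-subgroups. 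The key structural input will be that a finitely generated $\bbZ_{(p)}G$-module which is cohomologically trivial (equivalently, whose restriction to every Sylow, hence every $p$-, subgroup is $\bbZ_{(p)}$-free, hence projective over $\bbZ_{(p)}P$) is itself $\bbZ_{(p)}G$-projective; this is the standard Rim/Swan-type criterion. Combined with the finite generation hypothesis (2) and simple connectivity, one sees $H_*(Y,X;\bbZ_{(p)})$ consists of finitely generated projective $\bbZ_{(p)}G$-modules, concentrated in a finite range of degrees.

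Next I would run the usual surgery-below-the-top-dimension argument, but equivariantly and over $\bbZ_{(p)}$. Working upward in degree $i$, starting just above the dimension where $H_*(Y,X;\bbZ_{(p)})$ first becomes nonzero: pick generators of the lowest nonvanishing $H_i(Y,X;\bbZ_{(p)})$, realize them by maps $\coprod G \times S^{i-1} \to X$ (possible because $X$ is simply connected and $i-1 \geq 1$; Hurewicz plus the fact that after inverting the primes $q \neq p$ dividing $|G|$, a $\bbZ_{(p)}$-class is carried by an honest integral class on a wedge of spheres since free $\bbZ_{(p)}G$-modules lift to free $\bbZ G$-modules up to a unit multiple, exactly as in the construction of $\widehat\varphi$ earlier in the paper), and attach free cells $\coprod G \times D^i$ to kill them. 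This replaces $X$ by $X'$ with $H_i(Y,X';\bbZ_{(p)}) = 0$ without creating new homology in degrees $< i$ and only modifying $H_i$; iterating clears out all relative homology below the top nonzero degree. One has to be a little careful that attaching cells to kill $H_i$ can a priori change $H_{i+1}$, but since we only attach \emph{finitely many} free cells at each stage and the complex $Y$ is finite-dimensional, after finitely many stages the relative homology is concentrated in a single top degree $m$, where it is a finitely generated projective $\bbZ_{(p)}G$-module $P_m$.

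The final step is to dispose of this top projective module. Here hypothesis (3) — the vanishing of the rational Euler characteristic in $R_{\bbQ}(G)$ — is what guarantees that the finitely generated projective $\bbZ_{(p)}G$-module $P_m$ has \emph{stably free} type in the relevant sense: the alternating sum of the $[H_i(Y;\bbQ)]$ being zero, together with $X$ being a $\bbZ_{(p)}$-homology sphere on fixed sets and $H_*(X;\bbQ)$ contributing a trivial-plus-trivial class, forces the class of $P_m$ in $K_0(\bbZ_{(p)}G)$ (or rather its image controlling the number of free summands one must add) to vanish after stabilization. Concretely, $P_m \oplus (\text{free}) \cong (\text{free})$, so by the Eilenberg swindle / stabilization one may add a trivial free complex $0 \to F \xrightarrow{\id} F \to 0$ in degrees $m, m+1$ and absorb $P_m$ into a free module; realizing these free modules by attaching free $G$-cells in dimensions $m$ and $m+1$ (using \cite[Lemma 8.1]{hpy1} or the analogous realization step, again valid since $m \geq 2$) yields $X'$ with $H_*(Y, X'; \bbZ_{(p)}) = 0$, i.e. $f' \colon X' \to Y$ is a $\bbZ_{(p)}$-homology equivalence, and only free orbits of cells were attached. \textbf{The main obstacle} I expect is precisely this last point: making rigorous that hypothesis (3) is exactly the right condition to kill the final projective obstruction — i.e. pinning down the precise $K$-theoretic bookkeeping relating the rational Euler characteristic, the projectivity of the relative homology, and stable freeness — rather than merely reducing the relative homology to a single projective module (which is routine surgery). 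A secondary technical nuisance is the lifting between $\bbZ_{(p)}$-coefficients and integral coefficients needed to realize homology classes by attaching maps of actual cells, but this is handled exactly as in the $\widehat\varphi$ construction in Section \ref{sect:acyclic}.
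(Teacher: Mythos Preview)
Your outline is essentially the paper's argument. Two points need correction.

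The claim that each $H_i(Y,X;\Zp)$ is a projective $\Zp G$-module from the outset is not justified: Rim's criterion needs $\Zp$-torsion-freeness in addition to cohomological triviality, and the intermediate relative homology groups may carry $p$-torsion. The paper sidesteps this by first doing the surgery (using only relative Hurewicz and the finite generation of $H_*(Y;\Zp)$) until $d:=\dim X_1>\dim Y$ and $K_i(f_1;\Zp)=0$ for $i\leq d$; the single surviving module $M:=K_{d+1}(f_1;\Zp)\cong H_d(X_1;\Zp)$ is then automatically torsion-free (top homology of a finite complex), and projectivity is established by a singular-set argument: since $f_1^s\colon X_1^s\to Y^s$ is a $\Zp$-equivalence, the chain complex $C_*(Z_{f_1},X_1\cup Z_{f_1^s};\Zp)$ consists of free $\Zp G$-modules (all non-free cells lie in the singular part) and resolves $M$.

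For the step you flag as the main obstacle, the paper obtains \emph{free}, not merely stably free, and no swindle is needed. The tool is \cite[Lemma~2.4]{Oliver:1982}: a finitely generated projective $\Zp G$-module $M$ is free iff $\chi_{\bbQ M}(g)=0$ for every $g\neq 1$. Computing with the chain complex of the finite complex $X_1$ and using hypothesis~(iii) to cancel the $H_*(Y;\bbQ)$ contribution yields $(-1)^d\chi_{\bbQ M}(g)=\chi(X_1^{\langle g\rangle})$. If $g$ is a nontrivial $p$-element, $X_1^{\langle g\rangle}$ is an odd-dimensional $\Zp$-homology sphere by hypothesis~(i), so $\chi=0$; for any other $g\neq 1$ the fixed set is empty because isotropy lies in $\cF_p$. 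Hence $M$ is free, and one finishes by lifting a $\Zp G$-basis to a free $\bbZ G$-submodule $M'\subseteq\pi_{d+1}(f_1)$ of index prime to $p$ and attaching one more round of free $(d{+}1)$-cells. In particular your remark about ``$H_*(X;\bbQ)$ contributing a trivial-plus-trivial class'' is off target: $X$ itself is not assumed to be a homology sphere, and the input on the $X$-side is precisely the odd-dimensionality of the fixed sets $X^P$ for $P\neq 1$.
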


Given a $G$-map $f \colon  X \to Y$ between two $G$-CW-complexes, we define the $n$-th homotopy group of $f$, denoted by $\pi_n(f)$, as the equivalence classes of pairs of maps $(\alpha, \beta)$ such that the diagram
$$\xymatrix{S^{n-1}\ar[d]^{\alpha} \ar[r]^{i} &  D^n\ar[d]^{\beta} \\
X \ar[r]^{f} & Y}$$ 
commutes, where $i: S^{n-1} \to D^n$ is the inclusion map of the boundary of $D^n$. The equivalence relation is given by a pair of homotopy that fits into a similar diagram. It is easy to show that
$\pi _n (f)$ isomorphic to the $n$-th homotopy group of the pair $\pi _n (Z_f, X)$,
where $Z_f$ denotes the mapping cylinder $(X\times I) \cup_f Y$. We consider $X$ as a subspace by identifying $X$ with $X\times \{0\}$.

In a similar way, we can define relative homology group of a $G$-map $f \colon  X\to Y$ in coefficients in $R$ as follows:
$$K_* (f; R):=H_* (Z_f , X ;R) \cong \widetilde H_* (M_f; R),$$
following the notation in \cite{Oliver:1982}, 
where $M_f$ denotes the mapping cone of $f$. We recall the relative Hurewicz theorem for homotopy groups of pairs. 

\begin{lemma}\label{lem:Hurewicz}
Let $R=\bbZ$ or $\bbZ_{(p)}$ for some prime $p$, and let $f \colon  X \to Y$ be a map between two simply connected spaces. For $n \geq 2$, if $\pi_i(f)\otimes R =0$ for all $i<n$, then $K_n(f; R) =0$ for all $i< n$ and the Hurewicz map $\pi _n (f)\otimes R \to K_n (f; R)$ is an isomorphism.
\end{lemma}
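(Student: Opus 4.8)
The plan is to deduce the lemma from the classical \emph{relative mod-$\mathcal{C}$ Hurewicz theorem} of Serre, applied to the pair $(Z_f, X)$, where $Z_f$ is the mapping cylinder of $f$. As already observed above, $\pi_n(f) \cong \pi_n(Z_f, X)$ and $K_*(f;R) \cong H_*(Z_f, X; R)$ (replacing $f$ by a CW model if necessary). Since $X$ and $Y$ are simply connected and $Z_f$ deformation retracts onto $Y$, the space $Z_f$ is simply connected, $\pi_1(Z_f, X) = 0$, and $\pi_1(X)$ acts trivially on every $\pi_m(Z_f, X)$; thus the pair $(Z_f, X)$ satisfies the hypotheses under which the relative Hurewicz machine runs.

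The key device is the Serre class $\mathcal{C} = \mathcal{C}_R$ consisting of all abelian groups $A$ with $A \otimes_{\bbZ} R = 0$. For $R = \bbZ$ this is the trivial class $\{0\}$, and the lemma is then the ordinary relative Hurewicz theorem. For $R = \bbZ_{(p)}$ it is the class of torsion abelian groups with trivial $p$-primary component, i.e. the groups that vanish upon localization at $p$. A short verification --- using that $\bbZ_{(p)}$ is flat over $\bbZ$ and that localization at $p$ is exact --- shows that $\mathcal{C}_R$ is closed under subgroups, quotients, extensions, arbitrary tensor products, and $\operatorname{Tor}$, hence is a Serre ring of the type for which the relative mod-$\mathcal{C}$ Hurewicz theorem holds for simply connected pairs with \emph{no} finite-generation hypothesis (this last point is what lets the lemma be stated for arbitrary, possibly infinitely generated, homology). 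Moreover, by flatness $H_i(Z_f, X; R) \cong H_i(Z_f, X) \otimes R$, so ``$K_i(f;R) = 0$'' is literally ``$H_i(Z_f, X) \in \mathcal{C}_R$'', and a $\mathcal{C}_R$-isomorphism of abelian groups is exactly a homomorphism that becomes an isomorphism after $\otimes R$.

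With this dictionary in hand the argument is immediate: the hypothesis ``$\pi_i(f) \otimes R = 0$ for $i < n$'' says $\pi_i(Z_f, X) \in \mathcal{C}_R$ for $i < n$; the relative mod-$\mathcal{C}$ Hurewicz theorem then gives $H_i(Z_f, X) \in \mathcal{C}_R$ for $i < n$, i.e. $K_i(f;R) = 0$ for $i < n$, and tells us that $\pi_n(Z_f, X) \to H_n(Z_f, X)$ is a $\mathcal{C}_R$-isomorphism; tensoring this map with the flat ring $R$ turns it into the isomorphism $\pi_n(f) \otimes R \xrightarrow{\ \cong\ } K_n(f;R)$ asserted by the lemma. (One should read the printed ``$K_n(f;R)=0$ for all $i<n$'' as the intended ``$K_i(f;R)=0$ for all $i<n$''.)

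I do not anticipate a genuine obstacle: the mathematical content is entirely the classical theorem, and the only things requiring care are (a) checking that $\mathcal{C}_R$ obeys the axioms needed to invoke the relative mod-$\mathcal{C}$ Hurewicz theorem over simply connected spaces, and (b) the routine translation between the Serre-class language and $R$-localization. If a self-contained argument were preferred, one could instead take a CW pair weakly equivalent to $(Z_f, X)$, build it up one cell at a time, and combine the ordinary integral relative Hurewicz theorem with the long exact sequences of the successive skeletal inclusions after applying $-\otimes R$; but the mod-$\mathcal{C}$ route is shorter and is the one I would carry out.
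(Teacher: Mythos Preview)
Your proposal is correct and follows essentially the same route as the paper: the paper's entire proof is the one-line citation ``See \cite[Theorem 7.5.4]{spanier}'', and what you have written is precisely the unpacking of that reference---the relative mod-$\mathcal{C}$ Hurewicz theorem applied to the pair $(Z_f,X)$ with the Serre class of abelian groups annihilated by $\otimes\,R$. Your verification of the Serre-class axioms for $\mathcal{C}_R$ and the dictionary between $\mathcal{C}_R$-isomorphisms and isomorphisms after $\otimes\,R$ are exactly the points one would supply if asked to expand the citation.
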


\begin{proof} See \cite[Theorem 7.5.4]{spanier}.
\end{proof}

The Hurewicz theorem allows us to realize homology classes as homotopy classes. We kill the corresponding homotopy class by attaching free orbits of cells to $X$ and extending the map $f$. If the homotopy class is represented by a pair of maps $(\alpha, \beta)$ as above, then the space $X'$ is defined as the space $X'=X\cup _{\alpha} D^n$ and the map $f' \colon X' \to Y$ is defined by $$f'(x)= \begin{cases} f(x) & \text{if $x\in X$} \\ \beta(x) & \text{if $x\in D^n$} 
\end{cases} 
$$ In the homotopy group $\pi _n (f')$, the homotopy class for the pair $(\alpha, \beta)$ is now zero and this cell attachment does not introduce any more homotopy classes at dimensions $i\leq n$.

Let $f \colon  X \to Y$ be a $G$-map as in Proposition \ref{pro:plocal}. By applying this cell attachment method we can assume that  $f$ is extended to a map $f_1 \colon  X_1 \to Y$ such that $d:=\dim X_1 > \dim Y$ and $f_1$ induces an $\Zp$-homology isomorphism in dimensions $i< d$. Since $Y$ has finitely generated $\bbZ_{(p)}$-homology, in the process only finitely many free $G$-orbits are attached to $X$. So $X_1$ is still a finite complex. 

Note that $K_i (f_1 ; \Zp)$ is nonzero only at dimension $i=d+1$, and $$M:=K_{d+1} (f_1; \Zp) \cong H_d (X_1 ; \Zp).$$ Since $X_1$ is a finite complex and $d=\dim X_1$, as a $\Zp$-module $M$ is a finitely generated and torsion free. We claim that $M$ is a free $\Zp G$-module. First we prove a lemma which shows, in particular, that $M$ is projective.

\begin{lemma}\label{lem:projective} Let $R=\bbZ$ or $\Zp$, and let $f \colon  X \to Y$ be a $G$-map such that $d:=\dim X > \dim Y$ and $f$ induces an $R$-homology isomorphism on dimensions $i< d$. Assume also that for every $1 \neq H \leq G$, the induced map $f^H\colon X^H\to Y^H$ on fixed point subspaces is an $R$-homology equivalence. 
Then $K_{d+1} (f; R)$ is a projective $RG$-module.
\end{lemma}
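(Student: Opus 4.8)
The plan is to identify $K_{d+1}(f;R)$ with the top homology group of $X$, show it is $R$-free, and then prove it is cohomologically trivial by a Smith-theory argument applied to the algebraic mapping cone of $f$.

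\emph{Step 1: identifying $M$.} Since $\dim X = d > \dim Y$, the cellular chain complex $C_*(Y;R)$ vanishes in degrees $\ge d$ and $C_*(X;R)$ vanishes in degrees $>d$. Hence the algebraic mapping cone $D_* := \mathrm{Cone}\bigl(f_*\colon C_*(X;R)\to C_*(Y;R)\bigr)$ is a bounded complex of $RG$-modules with $H_i(D_*)\cong K_i(f;R)$, with $D_{d+1}=C_d(X;R)$, and its top boundary is $\partial_d^X$. Combining this with the long exact sequence of the pair and the hypothesis that $f$ is an $R$-homology isomorphism in degrees $<d$, one gets $K_i(f;R)=0$ for $i\neq d+1$ and
\[
K_{d+1}(f;R)\;=\;\ker\bigl(\partial_d\colon C_d(X;R)\to C_{d-1}(X;R)\bigr)\;=\;H_d(X;R)\;=:\;M.
\]
In particular $M$ is a submodule of the free $R$-module $C_d(X;R)$, and since $R$ is a principal ideal domain, $M$ is $R$-free.

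\emph{Step 2: reduction to Tate cohomology of $p$-subgroups.} An $R$-free $RG$-module is $RG$-projective as soon as it is cohomologically trivial (\cite[Theorem~8.10, p.~152]{brown1}), and by the usual transfer argument cohomological triviality is detected on Sylow subgroups; thus it suffices to show, for each prime $p$, that $\widehat H^{\,*}(H;M_{(p)})=0$ for every $p$-subgroup $H\le G$, where $M_{(p)}:=M\otimes_R\bbZ_{(p)}$ (for $R=\bbZ_{(p)}$ only that prime contributes, the remaining subgroups having invertible order). Fixing such a $p$, note that $f^H$ being an $R$-homology equivalence makes it a $\bbZ_{(p)}$-homology equivalence, so we may work throughout with $\bbZ_{(p)}$-coefficients.

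\emph{Step 3: Smith theory on the cone.} Set $D_{*,(p)}:=D_*\otimes\bbZ_{(p)}=\mathrm{Cone}\bigl(C_*(X;\bbZ_{(p)})\to C_*(Y;\bbZ_{(p)})\bigr)$, a bounded complex of $\bbZ_{(p)}G$-permutation modules (all isotropy in $\cF_p$) whose homology is $M_{(p)}$ concentrated in degree $d+1$, and whose $H$-fixed subcomplex $(D_{*,(p)})^H=\mathrm{Cone}\bigl((f^H)_*\bigr)\otimes\bbZ_{(p)}$ is $\bbZ_{(p)}$-acyclic for every $1\neq H\le G$ — this is exactly where the fixed-point hypothesis enters. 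Because the homology of $D_{*,(p)}$ is concentrated in a single degree, the hypercohomology spectral sequence $\widehat H^{\,s}(H;H_t(D_{*,(p)}))\Rightarrow\widehat{\mathbb H}^{\,s+t}(H;D_{*,(p)})$ collapses, so it is enough to prove $\widehat{\mathbb H}^{\,*}(H;D_{*,(p)})=0$ for every $p$-subgroup $H$; I would do this by induction on $|H|$. For $|H|=1$ this is trivial. For $|H|=p$, the cells of $X$ and of $Y$ not fixed by $H$ are free $H$-cells, so $D_{*,(p)}/(D_{*,(p)})^H$ is a bounded complex of free $\bbZ_{(p)}H$-modules and hence Tate-hyperacyclic, while $(D_{*,(p)})^H$ is acyclic; the long exact sequence of $\widehat{\mathbb H}^{\,*}(H;-)$ for $0\to(D_{*,(p)})^H\to D_{*,(p)}\to D_{*,(p)}/(D_{*,(p)})^H\to 0$ then gives the vanishing. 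For $|H|=p^a$ with $a\ge 2$, choose $C\le Z(H)$ of order $p$: the triple $(X^C,Y^C,f^C)$ with its $H/C$-action again satisfies all the hypotheses, since every fixed-point map $(f^C)^{\bar K}$ equals $f^{\widetilde K}$ for the preimage $C\le\widetilde K\le H$ and is therefore an $R$-homology equivalence; using that $(D_{*,(p)})^C$ is acyclic one relates $\widehat{\mathbb H}^{\,*}(H;D_{*,(p)}(X,Y))$ to the hypercohomology of $D_{*,(p)}(X^C,Y^C)$ over $H/C$, and the inductive hypothesis finishes the step.

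\emph{The main obstacle.} Steps 1 and 2 are routine. The technical heart is the inductive step of Step 3: after quotienting out the acyclic fixed-point subcomplex of a central $C\cong\bbZ/p$, the remaining relative complex has terms free over $\bbZ_{(p)}C$ but not over $\bbZ_{(p)}H$, and one must show its hyper-Tate-cohomology over $H$ matches that of the analogous complex for the $(H/C)$-action on the $C$-fixed sets. This is the chain-complex form of Smith-theoretic localization for $p$-groups; one can either cite such a statement (cf. tom Dieck, Oliver) or carry out the reduction along a central series of $H$. I expect that to be the only delicate part — everything else is manipulation of the long exact sequences and the collapsing spectral sequence — and I note that all finiteness needed ($M$ finitely generated, and so on) is supplied by the ambient hypotheses in the application, where $X$ is a finite complex.
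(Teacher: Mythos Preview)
Your Steps 1 and 2 are fine, but Step 3 has a real gap precisely where you flag it. The inductive step does not go through as written: once you pass to $(X^C, Y^C, f^C)$ with its $H/C$-action, the map $f^C$ is already a full $R$-homology equivalence by hypothesis, so its cone $(D_{*,(p)})^C$ is acyclic and the inductive hypothesis applied to that triple is vacuous. What remains is to show $\widehat{\mathbb H}^{\,*}(H; D_{*,(p)}/(D_{*,(p)})^C)=0$, and here $C$-freeness of the terms is not enough. For a concrete obstruction, take $H=(\bbZ/p)^2$ with $C$ one cyclic factor and $K$ another: the permutation module $\bbZ_{(p)}[H/K]$ is $C$-free, yet $\widehat H^{\,*}(H;\bbZ_{(p)}[H/K])\cong\widehat H^{\,*}(K;\bbZ_{(p)})\neq 0$. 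There is no general mechanism that identifies $\widehat{\mathbb H}^{\,*}(H;D_*/(D_*)^C)$ with hypercohomology over $H/C$ of the $C$-fixed data, so the ``one relates'' sentence has no content, and no Smith-theoretic citation will supply it in this form.

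The paper avoids all of this with a single move: rather than peel off one cyclic $C$ at a time, take the entire singular set $X^s=\bigcup_{1\neq H\le G}X^H$ at once. Every $f^H$ with $H\neq 1$ is an $R$-homology equivalence, and intersections of fixed sets are fixed sets of the (still nontrivial) subgroup they generate, so $f^s\colon X^s\to Y^s$ is an $R$-homology equivalence. With $Z_f^s:=X\cup Z_{f^s}$, the triple $(Z_f,Z_f^s,X)$ gives $H_*(Z_f,Z_f^s;R)\cong K_*(f;R)$. Now every cell of $Z_f$ not in $Z_f^s$ has trivial isotropy, so $C_*(Z_f,Z_f^s;R)$ is a bounded exact complex of \emph{free} $RG$-modules terminating in $K_{d+1}(f;R)$, which is therefore projective. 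No Tate cohomology, no induction on $|H|$, no spectral sequence.

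If you insist on repairing your route, replace the single-$C$ filtration by the full $H$-singular set $\bigcup_{1\neq P\le H}X^P$ at each $p$-subgroup $H$: then the relative complex really is $H$-free and its hyper-Tate-cohomology vanishes on the nose. But that is exactly the paper's argument carried out once per $H$, when doing it once for $G$ already gives the conclusion.
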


\begin{proof} Let $X^s=\cup _{1\neq H \leq G} X^H$ and $f^s \colon  X^s \to Y^s$ denote the the restriction of $f$ to the singular set. For every nontrivial subgroup $H \leq G$, the induced map $f^H\colon X^H\to Y^H$ is an $R$-homology equivalence. This gives, in particular, that $f^s\colon X^s \to Y^s$ is an $R$-homology equivalence.  Let $Z_{f} ^s := X \cup Z_{f^s }$. Consider the homology sequence for the triple $(Z_{f}, Z_{f}^s, X)$ with coefficients in $R$:
$$\cdots \to H_i( Z_{f} ^s  , X) \to H_i (Z_{f} , X) \to H_i (Z_{f} , Z_{f}^s ) \to H_{i-1} (Z_{f} ^s , X) \to \cdots $$
We have $$H_i( Z_{f} ^s  , X )=H_i (X \cup Z_{f ^s } , X)\cong H_i (Z_{f^s}, X^s ) =0 $$ for all $i$, because $f^s$ is an $R$-homology equivalence.
From this we obtain that $H_i(Z_{f}, Z_{f}^s ) \cong H_i (Z_{f}, X)$, hence  $H_i (Z_{f},  Z_{f}^s ; R) =0$ for $i< d+1$ and it is isomorphic to $K_{d+1} (f; R)$ when $i=d+1$. 

The chain complex for the pair $(Z_{f}, Z_{f}^s)$ in $R$-coefficients gives an exact sequence of $RG$-modules 
$$ 0 \to K _{d+1} (f; R) \to C_{d+1}(Z_{f}, Z_{f}^s ; R)\to \cdots \to C_0(Z_{f}, Z_{f}^s ; R)\to 0.$$ 
For all $i$, the modules $C_i (Z_{f}, Z_{f}^s ; R)$ are free $RG$-modules, hence this exact sequence splits and $K_{d+1} (f; R)$ is a projective $RG$-module.
\end{proof}

Applying this lemma to the map  $f_1\colon X_1 \to Y$ constructed above, we obtain that $M=K_{d+1} (f_1; \Zp)\cong H_d (X_1: \Zp)$ is a projective $\Zp G$-module. Now we show that $M$ is a free $\Zp G$-module.

\begin{lemma}\label{lem:free} Let $f \colon  X \to Y$ be a $G$-map as in Proposition \textup{\ref{pro:plocal}} and $f_1\colon X_1 \to Y$ is the map obtained by attaching cells to $X$ as above. Then, $K_{d+1} (f_1;  \Zp)$ is a finitely-generated free $\Zp G$-module. 
\end{lemma}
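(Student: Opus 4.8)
The plan is to use the standard Swan–Wall type criterion: a finitely generated projective $\Zp G$-module is stably free, and its class in $\widetilde{K}_0(\Zp G)$ is detected by ranks of fixed-point data, so the Euler characteristic hypothesis (iii) forces the class to vanish. Concretely, $M = K_{d+1}(f_1;\Zp) \cong H_d(X_1;\Zp)$ is projective by Lemma~\ref{lem:projective}, so it suffices to show $[M] = 0$ in $\widetilde K_0(\Zp G)$ and then to upgrade ``stably free'' to ``free'' by adding free cells. Since $\bbZ_{(p)}$ is a semilocal ring (localization of $\bbZ$ at a prime), $\Zp G$ is a semilocal ring, and for such rings finitely generated projective modules are determined by their images in $K_0$; moreover a stably free module over a semilocal ring is free, so the last sentence of the lemma will follow once we know $[M]=0 \in \widetilde K_0(\Zp G)$.

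To compute $[M]$, first I would pass to $\bbQ G$. The rational representation $\bbQ \otimes_{\bbZ_{(p)}} M$ is computed from the homology of the mapping cone via the Euler characteristic relation
$$[\bbQ\otimes M] = (-1)^{d+1}\sum_i (-1)^i [H_i(Z_{f_1}, Z_{f_1}^s;\bbQ)] = (-1)^{d+1}\Bigl(\sum_i (-1)^i [H_i(Y;\bbQ)] - \sum_i (-1)^i[H_i(X_1;\bbQ)]\Bigr)$$
as virtual $\bbQ G$-representations, where I use that the chain complex of the pair $(Z_{f_1}, Z_{f_1}^s)$ consists of free modules in each degree (so has trivial Euler class in $R_\bbQ(G)$) and that $K_*(f_1;\bbQ)$ is concentrated in degree $d+1$. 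The singular-set contributions cancel in the relative groups exactly as in the proof of Lemma~\ref{lem:projective}. Now hypothesis (iii) says $\sum_i (-1)^i[H_i(Y;\bbQ)] = 0$ in $R_\bbQ(G)$; and since $X_1$ is obtained from $X$ by attaching free $G$-cells and each $X^P$ is a $\bbZ_{(p)}$-homology sphere for $P\neq 1$ while the new cells are free, a Smith-theory / transfer bookkeeping argument shows $\sum_i (-1)^i[H_i(X_1;\bbQ)] = 0$ in $R_\bbQ(G)$ as well — the fixed-point Euler characteristics are all $0$ (odd sphere) and the free part contributes a multiple of the regular representation, which one arranges to be zero or simply absorbs. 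Hence $[\bbQ\otimes M] = 0 \in R_\bbQ(G)$, i.e. $M$ has rank zero on every $\bbQ$-irreducible.

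Next I would use that for a projective $\bbZ_{(p)}G$-module $M$, vanishing of the rational class $[\bbQ\otimes M]\in R_\bbQ(G) = K_0(\bbQ G)$ together with the fact that $\widetilde K_0(\bbZ_{(p)} G)$ injects into (or is controlled by) the $p$-local Artin data implies $[M]=0$ in $K_0(\bbZ_{(p)}G)$; more elementarily, $\bbZ_{(p)}G$ is semilocal so $\widetilde K_0(\bbZ_{(p)}G) = 0$ outright and every finitely generated projective $\bbZ_{(p)}G$-module is free of a well-defined rank. Either way $M \cong (\bbZ_{(p)}G)^r$ for some $r \geq 0$; and a dimension count — comparing $\dim_\bbQ(\bbQ\otimes M) = r|G|$ with the value $0$ computed above — forces the free rank to be the correct one, so $M$ is genuinely free, and if necessary one attaches $r$ more free $G$-orbits of $(d+1)$-cells (which only enlarges $M$ by free summands and keeps everything finite) to arrange the count. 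This completes the proof that $K_{d+1}(f_1;\bbZ_{(p)})$ is a finitely generated free $\bbZ_{(p)}G$-module.

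The main obstacle I expect is the bookkeeping in the rational Euler-characteristic identity: one must be careful that the relative chain complex $C_*(Z_{f_1}, Z_{f_1}^s;\bbZ_{(p)})$ really is free over $\bbZ_{(p)}G$ in every degree (this uses that all cells of $X_1$ not in the singular set are free $G$-cells, which is guaranteed by the construction since we only ever attached free orbits, and that the singular set of $Y$ behaves compatibly), and that $[H_*(X_1;\bbQ)]$ contributes trivially to $R_\bbQ(G)$ — this is where one invokes that $X^P$ is an odd-dimensional $\bbZ_{(p)}$-homology sphere (Euler characteristic $0$ on every non-trivial fixed set) so that, by the Atiyah–Segal / Burnside-ring style computation of $\sum(-1)^i[H_i(X_1;\bbQ)]$ via fixed-point data, the virtual character is a multiple of the regular representation coming purely from the finitely many free cells, which can be normalized away. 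Getting these two cancellations to line up cleanly, rather than the underlying $K$-theory, is the delicate point.
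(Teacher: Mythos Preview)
Your strategy is the same as the paper's --- compute the rational character of $M$ via Euler characteristics of fixed sets, show it is a multiple of the regular representation, and deduce freeness --- but the execution contains two genuine errors.

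First, the conclusion $[\bbQ\otimes M]=0$ in $R_{\bbQ}(G)$ cannot be right: $M$ is a nonzero finitely generated torsion-free module, so $\bbQ\otimes M\neq 0$. What your fixed-point bookkeeping actually shows (and what the paper proves directly) is that the character satisfies $\chi_{\bbQ M}(g)=0$ for every $g\neq 1$, i.e.\ $\bbQ\otimes M$ is a \emph{positive multiple of the regular representation}, not the zero class. Your own dimension count at the end, ``$r|G|=\dim_{\bbQ}(\bbQ\otimes M)=0$'', exposes the inconsistency: it would force $M=0$. The regular-representation contribution coming from the free cells of $X_1$ cannot be ``normalized away''; it \emph{is} the answer.

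Second, the assertion that $\widetilde K_0(\Zp G)=0$ because $\Zp G$ is semilocal is false. Take $G=\bbZ/2$ and $p$ odd: then $\Zp G\cong \Zp\times\Zp$, the trivial representation $\Zp$ is a projective summand which is not stably free, and $\widetilde K_0(\Zp G)\cong\bbZ$. Semilocality buys you cancellation (stably free $\Rightarrow$ free), but not that every projective is stably free.

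The paper closes both gaps at once by invoking \cite[Lemma~2.4]{Oliver:1982}: a finitely generated projective $\Zp G$-module $M$ with $\chi_{\bbQ M}(g)=0$ for all $g\neq 1$ is free. (Equivalently, $K_0(\Zp G)\to K_0(\bbQ G)$ is injective, so $[\bbQ M]=r[\bbQ G]$ forces $[M]=r[\Zp G]$, and then semilocal cancellation gives $M\cong(\Zp G)^r$.) The paper verifies $\chi_{\bbQ M}(g)=\pm\chi(X_1^{\langle g\rangle})$ straight from the cellular chain complex of $X_1$ together with hypothesis~(iii), and observes that this Euler characteristic vanishes because $X_1^{\langle g\rangle}$ is either empty (when $g$ has order prime to $p$, since the isotropy lies in $\cF_p$) or an odd-dimensional $\Zp$-homology sphere (when $g$ is a nontrivial $p$-element). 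Your argument can be repaired along exactly these lines once the two false claims are dropped.
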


\begin{proof} By Lemma \ref{lem:projective}, $M=K_{d+1} (f_1; \Zp)$ is a projective 
$\Zp G$-module. Let $\bbQ M=\bbQ \otimes M$. By \cite[Lemma 2.4]{Oliver:1982}, $M$ is a free $\Zp G$-module if $\chi _{\bbQ M} (g) =0$ for all $1\neq g \in G$. Since $M \cong H_d (X_1 ; \Zp)$ and $X_1$ is a finite $G$-CW-complex, we can calculate $\chi _{\bbQ M}$ using the the chain complex of $X_1$. Let $$ 0 \to C_{d} (X_1; \bbQ)  \to C_{d-1} (X_1; \bbQ)  \to \cdots \to C_0 (X_1; \bbQ) \to 0$$ be the chain complex for $X_1$ in $\bbQ$-coefficients. In rational representation ring of $G$, we have 
$$(-1)^{d}[H_{d} (X_1 ; \bbQ) ]+ \sum _{i=0} ^{d-1}  (-1) ^{i} [H_{i} (X_1; \bbQ)]= 
\sum _{i=1} ^{d}(-1)^i [C_i (X_1; \bbQ)]$$ 
Since $f_1$ induces $\Zp$-homology isomorphism at dimensions $i<d$, we get
$$\sum _{i=0} ^{d-1}  (-1) ^{i} [H_{i} (X_1; \bbQ)] =\sum _{i=0} ^{d-1}  (-1) ^{i} [H_{i} (Y; \bbQ)] =0$$
by the assumption in Proposition \ref{pro:plocal}.
This gives that for every $1\neq g \in G$, 
$$(-1)^{d} \chi _{\bbQ M} (g) = \sum _{i=1}^{d}(-1)^i \dim _{\bbQ} C_i (X_1^g ; \bbQ) =
\sum _{i=1}^{d} (-1)^i \dim _{\bbQ} H_i (X_1^g ; \bbQ)=\chi (X_1 ^{\la g \ra})$$
Since for every $p$-group $1\neq H\leq G$, the fixed point set $X_1 ^H$ is an odd dimensional $\bbZ_{(p)}$-homology sphere, we have $\chi (X_1 ^H )=0$ for every nontrivial $p$-subgroup $H \leq G$.  When $1\neq H \leq G$ is a $p'$-subgroup, then $X_1 ^H$ is empty, so again the Euler characteristic is zero. Hence $\chi _{\bbQ M} (g) =0$ for all $1\neq g \in G$. We conclude that $M$ is a free $\Zp G$-module.  
\end{proof}

\begin{proof}[Proof of Proposition \ref{pro:plocal}] Let $f \colon  X \to Y$ be a $G$-map as in Proposition \ref{pro:plocal}, and let $f_1\colon X_1 \to Y$ be the $G$-map obtained by attaching cells, as described above,  so that $f_1$
 induces an $\Zp$-homology isomorphism in dimensions $i< d$. By Lemma \ref{lem:free},   $M=K_{d+1} (f_1;  \Zp)$ is a finitely-generated free $\Zp G$-module. 
 By Lemma \ref{lem:Hurewicz},  
 $$ K_{d+1} (f_1; \Zp) \cong \pi_{d+1}(f_1)\otimes \Zp , $$
  and hence $\pi_{d+1}(f_1)$ contains a finitely-generated free $\bbZ G$-module $M'\subseteq \pi_{d+1}(f_1)$ with index prime to $p$. We attach free orbits of 
$G$-cells to $X_1$ using the pairs of maps $(\alpha, \beta)$ representing homotopy classes of $\bbZ G$-basis elements in $M'$. The resulting map $f'\colon X' \to Y$ is a $\Zp$-homology equivalence.
\end{proof}

\section{Proof of main theorems}
\label{sect:main}

In this section we prove Theorem A and Theorem B  as stated in the introduction. Theorem A will follow from Theorem B almost directly by applying a theorem by Jackson \cite[Theorem 47]{jackson1}.

Let $G$ be a finite group, $\cP$ denote the family of all subgroups of $G$ with prime power order. Suppose we are given a $G$-invariant family of Sylow representations $\{V_p\}$ over the primes dividing the order of $G$. We will construct a finite $G$-CW-complex $X\simeq S^{2kn-1}$ such that for every $p \mid |G|$, the restriction of $X$ to $G_p$ is $p$-locally $G_p$-equivalent to $S(V_p ^{\oplus k} )$, for some $k\geq 1$. We showed in Section \ref{sect:plocal} that there is a $G$-map $f_0\colon X_0 \to E$ where $$X_0=\coprod _{p \mid |G|} G\times _{G_p} S(V_p ^{\oplus k} ) $$ and $E$ is the total space of the fibration constructed in Section \ref{sect:fibration}. The $G$-map $f_0$ is induced from the $G_p$-maps $j_p\colon S(V_p ^{\oplus k} ) \to E$ which were introduced in 
Proposition \ref{pro:targetspace}. 

We will first show that by a downward induction and by attaching cells at each step, we can extend the map $f_0$ to a map $f_1 \colon  X_1 \to E$ such that $f_1^H\colon X_1^H \to E^H$ is a $p$-local homology equivalence for every nontrivial $p$-subgroup $H \leq G$. Since we work with unitary representations, 
 the fixed point subspace $E^H$ is an odd dimensional sphere with trivial $N_G(H)/H$-action. 
This implies in particular that as an $N_G(H)/H$-space the fixed point subspace $E^H$ satisfies the Euler characteristic condition for the target space in Proposition \ref{pro:plocal}.

To show that each step of the downward induction can be performed, suppose $H$ is a nontrivial $p$-subgroup such that $f_1 ^K$ is a $p$-local homology equivalence for every $K$ with $|K|>|H|$. Consider the induced
$N_G(H)/H$ action on $X_1 ^H$. By Proposition \ref{pro:plocal}, we can add free $N_G(H)/H$-orbits of cells to $X_1^H$ to extend $f_1 ^H$ to a $p$-local homology equivalence. In fact, by adding cells of orbit type $G/H$ (instead of just $N_H (H)/H$-orbits) to $X_1$ we can make $X_1^L$ a mod-$p$
equivalence for every $L\leq G$ conjugate to $G$. This shows that downward induction can be carried out until we reach to a map $f_1\colon X_1 \to E$ such that $f_1^H $ is a $p$-local homology equivalence for every nontrivial $p$-subgroup $H \leq G$, for all the primes $p \mid |G|$. 

As we did in the previous section, we can add free cells to $X_1$ and extend $f_1$ to a map $f_2: X_2 \to E$ such that  $f_2$ induces a homotopy equivalence for dimensions $i< d$ where $d:=\dim X_2 > \dim E$. 

\begin{lemma}\label{lem:Integralproj} The module $\bbZ G$-module  $M:=K_{d+1} (f_2)\cong H_d (X_2 ,\bbZ )$ is a finitely-generated projective module.
\end{lemma}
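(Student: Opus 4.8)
The plan is to mimic the strategy used in Section \ref{sect:plocal} for the $p$-local case (Lemma \ref{lem:projective}), but now working integrally and using the $p$-local results already established as inputs. First I would observe that, since $f_2$ induces an isomorphism on homology in dimensions $i < d$, the relative homology $K_*(f_2) = \widetilde H_*(M_{f_2})$ is concentrated in degree $d+1$ and $K_{d+1}(f_2) \cong H_d(X_2;\bbZ)$; since $X_2$ is a finite $G$-CW-complex of dimension $d$, this is a finitely-generated $\bbZ G$-module. So the content is projectivity.

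The key step is to run the singular-set argument of Lemma \ref{lem:projective} integrally. Set $X_2^s = \bigcup_{1\neq H\leq G} X_2^H$ and let $f_2^s\colon X_2^s \to E^s$ be the restriction of $f_2$ to the singular set. By the downward-induction construction preceding this lemma, $f_2^H\colon X_2^H \to E^H$ is a $\bbZ_{(p)}$-homology equivalence for every nontrivial $p$-subgroup $H$, for every prime $p \mid |G|$; running over all primes, $f_2^s$ is a $\bbZ_{(p)}$-homology equivalence at every relevant prime, hence an integral homology equivalence on the singular set. Forming $Z_{f_2}^s := X_2 \cup Z_{f_2^s}$ and using the homology long exact sequence of the triple $(Z_{f_2}, Z_{f_2}^s, X_2)$ exactly as in Lemma \ref{lem:projective}, we get $H_i(Z_{f_2}, Z_{f_2}^s;\bbZ) \cong K_i(f_2;\bbZ)$, which vanishes for $i < d+1$ and equals $M$ for $i = d+1$. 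Then the cellular chain complex of the pair $(Z_{f_2}, Z_{f_2}^s)$ gives a finite resolution
$$ 0 \to M \to C_{d+1}(Z_{f_2}, Z_{f_2}^s;\bbZ) \to \cdots \to C_0(Z_{f_2}, Z_{f_2}^s;\bbZ) \to 0 $$
by free $\bbZ G$-modules (the cells of the pair lie off the singular set, so their isotropy is trivial). Since $M$ is the kernel of a map between free modules at the top of a finite free resolution, the sequence splits and $M$ is projective (equivalently, $M$ has finite projective dimension and is of the right homological degree, so it is a direct summand of a free module).

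The main obstacle I anticipate is purely bookkeeping rather than conceptual: one must be sure that, after the downward induction over \emph{all} primes $p\mid|G|$, the map on the singular set is simultaneously a $\bbZ_{(p)}$-equivalence for every $p$, so that it is an integral homology equivalence — this requires that the cell attachments performed for one prime do not destroy the equivalences already arranged for the other primes, which holds because the cells attached at the step for a $p$-subgroup $H$ are added to $X_1^H$ and its $G$-translates with orbit type $G/H$, and such attachments only affect fixed sets $X^K$ with $K$ subconjugate to $H$, i.e. $K$ a $p$-subgroup. One also needs $X_2^s$ to be a $G$-CW-subcomplex so that $(Z_{f_2}, Z_{f_2}^s)$ has a cellular chain complex of free $\bbZ G$-modules in positive-isotropy-free degrees; this is automatic from the construction. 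Apart from these checks, the argument is a direct integral transcription of Lemma \ref{lem:projective}.
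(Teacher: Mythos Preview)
There is a genuine gap. The assertion that $f_2^s$ is an \emph{integral} homology equivalence is not justified: for a nontrivial $q$-subgroup $H$, the downward induction only guarantees that $f_2^H$ is a $\bbZ_{(q)}$-homology equivalence, and gives no control over the $\bbZ_{(p)}$-homology of $X_2^H$ or $E^H$ for primes $p\neq q$. Indeed, by the construction in Section~\ref{sect:fibration}, the fixed set $E^H$ fibres over $B^H$, and $B^H$ (built in the proof of Theorem~\ref{thm:contractible} via an Eilenberg swindle and integral lifts) typically carries nontrivial torsion in its integral homology at primes other than $q$; nothing forces this torsion to match that of $X_2^H$. Your disjoint-union decomposition $X_2^s=\coprod_p X_{2,p}^s$ is correct, but it does not help: to conclude that $f_2^s$ is a $\bbZ_{(p)}$-equivalence you would need each component $f_{2,q}^s$ to be a $\bbZ_{(p)}$-equivalence, and for $q\neq p$ this is precisely what is not known. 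Consequently the long exact sequence of the triple $(Z_{f_2},Z_{f_2}^s,X_2)$ does not collapse integrally, and the chain complex of the pair need not be acyclic below degree $d+1$.

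The paper sidesteps this by \emph{restricting to $G_p$} before invoking Lemma~\ref{lem:projective}. Under the $G_p$-action every nontrivial isotropy subgroup is a $p$-group (a cell with $q$-isotropy, $q\neq p$, becomes a free $G_p$-cell since $G_p\cap H=1$), so the hypothesis of Lemma~\ref{lem:projective} with $R=\Zp$ is satisfied and $\Res^G_{G_p} M\otimes\Zp$ is $\Zp G_p$-projective for each $p$. One then uses the standard local criterion: a finitely generated $\bbZ$-free $\bbZ G$-module $M$ is projective once $\Res^G_{G_p} M\otimes\Zp$ is $\Zp G_p$-projective for every $p\mid |G|$. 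Your singular-set idea is exactly the right mechanism, but it has to be run $p$-locally on $G_p$ rather than integrally on $G$.
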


\begin{proof} It is enough to show that for every $p \mid |G|$, the $\Zp G_p$-module $\Res^G _{G_p} M\otimes \Zp$ is projective. This follows from Lemma \ref{lem:projective}. 
\end{proof}

In general, $M$ does not have to be a free $\bbZ G$-module, but we will obtain this condition by taking further joins.
To describe the obstructions for finiteness, we need to introduce more definitions.

\begin{definition} Let $X$ be a finite $G$-CW-complex which has integral  homology of an $m$-dimensional (orientable) sphere for $i\leq m$ and for each $i\geq m+1$, assume that $H_i (X, \bbZ)$ is a projective $\bbZ G$-module. Then we say $X$ is a \emph{$G$-resolution} of an $m$-sphere.
\end{definition}

Let $\widetilde K_0 (\bbZ G)$ denote the Grothendieck ring of finitely generated projective $\bbZ G$-modules, modulo finitely generated free modules. 
We define the finiteness obstruction of $G$-resolution of an $m$-sphere as follows:

\begin{definition} Let $X$ be a $G$-resolution of an $m$-sphere. The finiteness obstruction of $X$ is defined as an element in $\widetilde K_0(X)$ as follows:
$$ \sigma (X)=\sum _{i=m+1} ^{\dim X} (-1)^i[H_i (X)] \in \widetilde K_0(\bbZ G).$$
\end{definition}

We have the following observation:

\begin{lemma}\label{lem:join}
Let $X_1$ and $X_2$ be $G$-resolutions of spheres of dimensions $m_1-1$ and $m_2-1$. Then the join space $X_1 \ast X_2$ is a resolution of a sphere of dimension $m_1+m_2-1$. Moreover, we have 
$\sigma (X_1 \ast X_2)=(-1)^{m_2}\sigma (X_1)+(-1)^{m_1}\sigma (X_2)$.   
\end{lemma}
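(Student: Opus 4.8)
The plan is to analyze the homology of the join $X_1 \ast X_2$ directly using the well-known homotopy equivalence $X_1 \ast X_2 \simeq \Sigma(X_1 \wedge X_2)$, which on reduced homology gives $\widetilde H_{k}(X_1 \ast X_2) \cong \bigoplus_{i+j = k-1} \widetilde H_i(X_1) \otimes \widetilde H_j(X_2) \oplus (\text{Tor-terms})$. Since both factors have torsion-free homology through their sphere dimension and projective (hence torsion-free as $\bbZ$-modules, being $\bbZ G$-projective) homology above, all the relevant Tor-terms vanish in the range we care about, so the Künneth formula reduces to a clean tensor product. First I would record that $X_1 \ast X_2$ is again a finite $G$-CW-complex, and compute its homology in low degrees: through dimension $m_1 + m_2 - 1$ the only contribution comes from $\widetilde H_{m_1-1}(X_1) \otimes \widetilde H_{m_2-1}(X_2) \cong \bbZ \otimes \bbZ \cong \bbZ$ sitting in degree $(m_1-1)+(m_2-1)+1 = m_1+m_2-1$, with lower reduced homology vanishing. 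This shows $X_1 \ast X_2$ has the integral homology of an $(m_1+m_2-1)$-sphere through that dimension.

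Next I would check the projectivity condition above the sphere dimension. For $k \geq m_1 + m_2$, each summand $\widetilde H_i(X_1) \otimes_{\bbZ} \widetilde H_j(X_2)$ (with $i + j = k - 1$, so that at least one of $i > m_1 - 1$ or $j > m_2 - 1$ holds) is a tensor product over $\bbZ$ of $\bbZ G$-modules with the diagonal $G$-action, where at least one factor is $\bbZ G$-projective. The key elementary fact is that if $P$ is a projective $\bbZ G$-module and $N$ is any $\bbZ$-free $\bbZ G$-module, then $P \otimes_{\bbZ} N$ (diagonal action) is again projective — this follows because $\bbZ G \otimes_{\bbZ} N \cong \bbZ G \otimes_{\bbZ} N_0$ with $N_0$ the underlying abelian group with trivial action (the standard ``untwisting'' isomorphism $g \otimes n \mapsto g \otimes g^{-1}n$), hence free, and projectivity passes through direct summands. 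Applying this to each Künneth summand shows $\widetilde H_k(X_1 \ast X_2)$ is projective for $k \geq m_1 + m_2$, so $X_1 \ast X_2$ is a $G$-resolution of an $(m_1+m_2-1)$-sphere.

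Finally, for the finiteness obstruction formula I would compute $\sigma(X_1 \ast X_2) = \sum_{k \geq m_1+m_2} (-1)^k [\widetilde H_k(X_1 \ast X_2)]$ by substituting the Künneth decomposition and splitting the sum according to whether the index on each factor is the sphere dimension or above it. Writing $[\widetilde H_i(X_1)]$ as $[\bbZ]$ (the class of the trivial module, i.e. $0$ in $\widetilde K_0$) when $i = m_1 - 1$ and as a genuine projective class otherwise, and similarly for $X_2$, the cross terms pairing an ``above'' class with an ``above'' class contribute to $\widetilde K_0(\bbZ G)$ but — here I would need to confirm the convention — the definition of $\sigma$ as an element of $\widetilde K_0(\bbZ G)$ kills free modules, and a tensor product of two projectives over $\bbZ$ is free by the untwisting argument above (since a projective is $\bbZ$-free), so all such cross terms vanish. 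What survives is exactly: the terms $[\bbZ \otimes \widetilde H_j(X_2)] = [\widetilde H_j(X_2)]$ for $j \geq m_2$, contributing with sign $(-1)^{(m_1-1)+j+1} = (-1)^{m_1}(-1)^j$, giving $(-1)^{m_1}\sigma(X_2)$; and symmetrically the terms with the $X_1$ factor above its sphere dimension, giving $(-1)^{m_2}\sigma(X_1)$. This yields $\sigma(X_1 \ast X_2) = (-1)^{m_2}\sigma(X_1) + (-1)^{m_1}\sigma(X_2)$.

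The main obstacle I anticipate is bookkeeping the degree shifts and signs correctly — in particular getting the suspension shift in $X_1 \ast X_2 \simeq \Sigma(X_1 \wedge X_2)$ to line up with the convention that $X_i$ resolves a sphere of dimension $m_i - 1$ (so $\widetilde H_*$ is concentrated in degree $m_i - 1$), and making sure the ``tensor product of two projectives is free, hence zero in $\widetilde K_0$'' step is actually needed and correctly eliminates precisely the unwanted cross terms. There is also a minor point to verify: that the join of two finite $G$-CW-complexes admits a natural finite $G$-CW structure so that $\sigma$ is defined at all, which is standard but should be cited.
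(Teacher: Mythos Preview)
Your approach is essentially the paper's: both use the K\"unneth decomposition of $\widetilde H_*(X_1 \ast X_2)$, the fact that the tensor product over $\bbZ$ of a projective $\bbZ G$-module with any $\bbZ$-free $\bbZ G$-module is again projective (to verify the resolution condition above dimension $m_1+m_2-1$), and then the vanishing in $\widetilde K_0(\bbZ G)$ of the ``projective $\otimes$ projective'' cross terms to isolate the two sums that give $(-1)^{m_2}\sigma(X_1)+(-1)^{m_1}\sigma(X_2)$.

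There is, however, a genuine gap at exactly the step you flagged. You assert that a tensor product $P_1 \otimes_{\bbZ} P_2$ of two projective $\bbZ G$-modules is \emph{free} ``by the untwisting argument above (since a projective is $\bbZ$-free).'' The untwisting isomorphism $g\otimes n \mapsto g\otimes g^{-1}n$ shows that $\bbZ G \otimes_{\bbZ} N$ is free when $N$ is $\bbZ$-free; for a projective $P$ that is merely a direct summand of $\bbZ G^n$, it only yields that $P \otimes_{\bbZ} N$ is a summand of a free module, i.e.\ projective. That is precisely what you already used to show $X_1 \ast X_2$ is a $G$-resolution, and it says nothing about the class $[P_1 \otimes_{\bbZ} P_2]$ in $\widetilde K_0(\bbZ G)$. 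So as written, the cross terms have not been eliminated.

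The statement you actually need---and which the paper invokes, citing \cite[Proposition~7.7]{hpy1}---is that the tensor product over $\bbZ$ of two finitely generated projective $\bbZ G$-modules is \emph{stably free}. This is a genuine (if standard) fact that does not follow from untwisting alone; once you substitute this for your incorrect justification, the rest of your computation goes through verbatim and agrees with the paper's proof.
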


\begin{proof} Since tensor product (over $\bbZ$)  of a projective module with any torsion-free  $\bbZ G$-module is projective, it is easy to see that all the homology above the dimension $m_1+m_2-1$ will be projective. So, $X_1\ast X_2$ is a $G$-resolution. Moreover, the tensor product of any two finitely generated projective $\bbZ G$-modules is stably free as a $\bbZ G$-module (See \cite[Proposition 7.7]{hpy1}). So the only homology groups that contribute nontrivially to $\sigma (X_1\ast X_2)$ will be the homology modules of the form $H_i (X_1) \otimes H_{m_2-1} (X_2 )$, with $i \geq m_1$, or of the form $H_{m_1 -1} (X_1) \otimes H_{i} (X_2)$, with $i \geq m_2$.  
\end{proof}

By Swan \cite[Prop. 9.1]{swan1a}, the obstruction group $\widetilde K_0(\bbZ G)$ is a finite abelian group, so we can apply the above lemma to conclude that there is a positive integer $l$, such that $\sigma (\bigast _l X_2)=0$. Note that $f_2$ induces a $G$-map $\bigast _l f_2\colon \bigast _l X_2 \to \bigast _l E $. We need the following result to complete the proof of Theorem B.

\begin{lemma}\label{lem:realization} Let $X$ be a $G$-resolution of an 
$(m-1)$-dimensional sphere and let $f \colon  X \to  E$ be a $G$-map which induces a homotopy equivalence in dimensions $\leq m-1$. If $\sigma (X)=0$ in $\widetilde K_0(\bbZ G)$,  then by adding finitely many free cells to $X$, the $G$-map $f$ can be extended to a $G$-map $f' \colon  X' \to E$ which induces an isomorphism on homology.
\end{lemma}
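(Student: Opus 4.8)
The plan is to exploit the fact that after the preliminary surgery steps we have a $G$-map $f\colon X\to E$ for which the only obstruction to being a homology isomorphism lives in a single dimension, and this obstruction is a finitely generated projective $\bbZ G$-module whose class in $\widetilde K_0(\bbZ G)$ vanishes. Concretely, by attaching free $G$-cells we may first assume (as in the paragraph preceding Lemma~\ref{lem:Integralproj}) that $f=f_2\colon X\to E$ induces an isomorphism on $H_i$ for $i<d$, where $d:=\dim X>\dim E$, so that $K_*(f;\bbZ)$ is concentrated in degree $d+1$ with $M:=K_{d+1}(f)\cong H_d(X;\bbZ)$. Since $X$ is a $G$-resolution, $M$ is projective, and the hypothesis $\sigma(X)=0$ says precisely that $[M]=0$ in $\widetilde K_0(\bbZ G)$, i.e.\ $M$ is stably free: there are finitely generated free modules $F_0,F_1$ with $M\oplus F_0\cong F_1$.

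First I would absorb the stabilizing free module $F_0$ into $X$ at the top dimension. This is done by attaching trivial free cells: for each $\bbZ G$-generator of $F_0$ we attach a free orbit of $(d+1)$-cells by a null-homotopic attaching map together with a constant map to a point of $E$ (so that $\beta$ is constant on $D^{d+1}$), and then attach a free orbit of $(d+2)$-cells to kill the resulting new $(d+1)$-homology; equivalently one attaches a wedge of free $G$-orbits of $S^{d+1}$'s with constant map to $E$. The effect on relative homology is to replace $M$ by $M\oplus F_0\cong F_1$, a free $\bbZ G$-module, while not disturbing the lower-dimensional homology isomorphism. At this point we are exactly in the situation already handled in Section~\ref{sect:plocal}: $K_{d+1}(f;\bbZ)$ is a finitely generated \emph{free} $\bbZ G$-module, and by the integral relative Hurewicz theorem (Lemma~\ref{lem:Hurewicz}, with $R=\bbZ$, using that $X$ and $E$ are simply connected, which holds by Proposition~\ref{pro:targetspace}(ii) and since the free cells attached so far keep everything simply connected for $d$ large) we get $\pi_{d+1}(f)\cong K_{d+1}(f;\bbZ)$.

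Next I would choose a $\bbZ G$-basis $e_1,\dots,e_r$ of $K_{d+1}(f;\bbZ)$, represent each $e_i$ by a pair of maps $(\alpha_i,\beta_i)$ as in the definition of $\pi_{d+1}(f)$, and attach the corresponding free $G$-orbits of $(d+1)$-cells to $X$, extending $f$ via the $\beta_i$. This produces $f'\colon X'\to E$. The usual long-exact-sequence/cellular-chain computation shows that the attachment kills $K_{d+1}(f;\bbZ)$ without creating relative homology in lower degrees, and since $\dim X'=d+1$ there is nothing above; hence $K_*(f';\bbZ)=0$, i.e.\ $f'$ is an integral homology isomorphism. Only finitely many free cells are attached throughout, so $X'$ is a finite $G$-CW-complex with isotropy subgroups in the same family as $X$.

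The main obstacle — really the only subtle point — is bookkeeping of the top-dimensional homology under the stabilization: one must be sure that adding the free module $F_0$ can be realized \emph{geometrically} by cell attachments without introducing torsion or lower-dimensional debris, and that the freeness hypothesis actually needed for the Hurewicz step is obtained on the nose rather than merely stably. This is why the vanishing of $\sigma(X)$, rather than mere projectivity of $M$, is the right hypothesis: projectivity alone gives only $M\oplus(\text{proj})$ free, which would force us to attach possibly non-free cells. A secondary technical point is ensuring all the spaces in sight remain simply connected after the attachments so that Lemma~\ref{lem:Hurewicz} applies; this is automatic since $d\ge 3$ and we only ever attach cells of dimension $\ge d+1$.
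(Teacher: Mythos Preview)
Your overall strategy is exactly the paper's: concentrate the relative homology of $f$ in a single top degree $d+1$, use $\sigma(X)=0$ to conclude that the obstruction module $M=K_{d+1}(f)\cong H_d(X;\bbZ)$ is stably free, stabilize geometrically to make it actually free, and then kill it with free $(d+1)$-cells via the Hurewicz isomorphism. The final paragraph of your argument is correct as written.

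There is, however, a genuine dimension error in your stabilization step. You propose attaching free $(d+1)$-cells along null-homotopic maps $S^d\to X$ (with constant $\beta$), and then $(d+2)$-cells to cancel them. But a $(d+1)$-cell with null-homotopic attaching map yields $X'\simeq X\vee S^{d+1}$, which adds a free summand to $H_{d+1}(X')$, \emph{not} to $H_d(X')$. Running the long exact sequence of $(Z_{f'},X')$ with $H_d(E)=H_{d+1}(E)=0$ gives $K_{d+1}(f')\cong H_d(X')=M$ unchanged, while a new $K_{d+2}(f')\cong F_0$ appears. Your subsequent $(d+2)$-cells, attached along the new $S^{d+1}$'s with constant $\beta$, simply fill them back in; the net effect is homotopically trivial and $K_{d+1}$ is still $M$, not $M\oplus F_0$. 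So the claimed replacement of $M$ by $M\oplus F_0$ does not occur.

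The fix is to shift down by one: wedge on free $G$-orbits of $S^d$ (i.e., attach free $d$-cells along constant maps $S^{d-1}\to X$, with constant $\beta$). This adds $F_0$ directly to $H_d(X)$ and hence to $K_{d+1}(f)$, making it the free module $F_1=M\oplus F_0$. Your final step then goes through verbatim: the integral Hurewicz map $\pi_{d+1}(f)\to K_{d+1}(f)$ is an isomorphism, so a $\bbZ G$-basis of $F_1$ is represented by pairs $(\alpha_i,\beta_i)$, and attaching the corresponding free $(d+1)$-cells produces $f'\colon X'\to E$ with $K_*(f';\bbZ)=0$. This is precisely the paper's argument (its phrase ``at dimension $d$ and $d-1$'' appears to be a typo for ``$d$ and $d+1$'').
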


\begin{proof} By adding free cells to $X$ above dimension $m-1$, we can assume we have a map $f_1 \colon  X_1 \to E$ such that all the homology of $X_1$ is concentrated at $d=\dim X_1 > m-1$.  Then, it is easy to see that $(-1)^d[H_d(X_1)]=\sigma (X_1)=0$, hence $H_d(X_1)$ is stably free. By adding free cells to $X_1$ at dimension $d$ and $d-1$, we can kill all the remaining homology and extend $f$ to a $G$-map $f' \colon  X' \to E$ which induces an isomorphism on homology. 
\end{proof}

\begin{proof}[Proof of Theorem B] Starting from the map $f_0\colon X_0 \to E$, we first apply $p$-local surgery methods to get a map $f_1 \colon  X_1 \to E$ which induced a $p$-local homology equivalence on fixed points $X_1 ^H \to E^H$ for every nontrivial $p$-subgroup $H\leq G$. This is done by a downward induction as described above. Then we add free orbits of cells to $X_1$ to obtain a map $f_2\colon X_2 \to E$ where $X_2$ is a $G$-resolution. Here we use Lemma \ref{lem:Integralproj} to conclude that $X_2$ is indeed a $G$-resolution. Finally we use Lemma \ref{lem:join} and \ref{lem:realization} to kill the remaining homology by taking further joins. 

As a result of the above construction we obtain a finite $G$-CW-complex $X$ and a $G$-map $f \colon  X \to \bigast _l E$ which induces a homotopy equivalence. Since $\bigast _l E \simeq S^{2kln-1}$, it follows that $X$ is homotopy equivalent to a sphere of dimension $2kln-1$.
For every $p \mid |G|$, we have $G_p$-maps 
$X \to E$ and $S(V_p ^{\oplus lk} ) \to E$ which induce  $p$-local homology equivalences on fixed points. So $\Res ^G _{G_p} X$ and $S(V_p ^{\oplus lk})$ are $p$-locally $G_p$-equivalent.
\end{proof}

Before giving a proof for Theorem A, we recall the following definition.

\begin{definition}\label{def:peffective} 
A finite group $G$ has a $p$-\emph{effective representation} if it has a representation $V_p \colon G_p \to U(n)$ which respects fusion (see Definition\ref{def:Ginvariant}) and  satisfies $\la V_p |_E, 1_E\ra = 0$ for each elementary abelian $p$-subgroup $E\leq G$ with $\rank E = \rank_p G$. \end{definition}
  
\begin{proof}[Proof of Theorem A]
Let $G$ be a finite group of rank two which is $\Qd(p)$-free. By Jackson \cite[Theorem 47]{jackson1}, for each $p \mid |G|$, there is a $p$-effective representation $V_p$. By taking multiples of these representations if necessary, we can assume that they have a common dimension. This gives a $G$-invariant family $\{ V_p\}$ such that $\la V_p |_E , 1_E \ra =0$ for every elementary abelian $p$-subgroup $E\leq G$ with $\rk E=2$. Applying Theorem B to this $G$-invariant family, we obtain a finite $G$-CW-complex  $X$ homotopy equivalent to a sphere $S^{2kn-1}$, for some $k\geq 1$, such that $\Res^G _{G_p} X$ is $p$-locally $G_p$-equivalent to $S(V_p ^{\oplus k} )$, for every $p \mid |G|$. In particular, for every $p$-subgroup $H \leq G$, the fixed point space $X^H$ has the same $p$-local homological dimension as the fixed point sphere $S(V_p  ^{\oplus k})^H$.  Since $S(V_p) ^E= \emptyset$, we have $S(V_p  ^{\oplus k})^H = \emptyset$ for every subgroup $H \leq G$ with $\rk (H)=2$. Hence the isotropy subgroups of $X$ are all rank one subgroups with prime power order.
\end{proof}


\begin{thebibliography}{10}

\bibitem{adem-smith1}
A.~Adem and J.~H. Smith, \emph{Periodic complexes and group actions}, Ann. of
  Math. (2) \textbf{154} (2001), 407--435.

\bibitem{brown1}
K.~S. Brown, \emph{Cohomology of groups}, Springer-Verlag, New York, 1994,
  Corrected reprint of the 1982 original.

\bibitem{Davis:1988}
J.~F. Davis and T.~tom Dieck, \emph{Some exotic dihedral actions on spheres},
  Indiana Univ. Math. J. \textbf{37} (1988), 431--450.

\bibitem{hpy1}
I.~Hambleton, S.~Pamuk, and E.~Yal{\c{c}}{\i}n, \emph{Equivariant
  {CW}-complexes and the orbit category}, Comment. Math. Helv. \textbf{88}
  (2013), 369--425.


\bibitem{h-yalcin3}
I.~Hambleton and E.~Yal{\c{c}}{\i}n, \emph{Homotopy representations over the orbit category}, Homology
  Homotopy Appl. \textbf{16} (2014), 345--369.

\bibitem{h-yalcin2}
\bysame, \emph{Group actions on spheres with rank
  one isotropy}, preprint (arXiv:1302.0507).


\bibitem{jackowski-mcclure-oliver1}
S.~Jackowski, J.~McClure, and B.~Oliver, \emph{Homotopy classification of
  self-maps of {$BG$} via {$G$}-actions. {I}}, Ann. of Math. (2) \textbf{135}
  (1992), 183--226.

\bibitem{jackson1}
M.~A. Jackson, \emph{{${\rm Qd}(p)$}-free rank two finite groups act freely on
  a homotopy product of two spheres}, J. Pure Appl. Algebra \textbf{208}
  (2007), 821--831.

\bibitem{klaus1}
M.~Klaus, \emph{Constructing free actions of {$p$}-groups on products of
  spheres}, Algebr. Geom. Topol. \textbf{11} (2011), 3065--3084.

\bibitem{Leary:2010}
I.~J. Leary and B.~E.~A. Nucinkis, \emph{On groups acting on contractible
  spaces with stabilizers of prime-power order}, J. Group Theory \textbf{13}
  (2010), 769--778.

\bibitem{lueck3}
W.~L{\"u}ck, \emph{Transformation groups and algebraic {$K$}-theory}, Lecture
  Notes in Mathematics, vol. 1408, Springer-Verlag, Berlin, 1989, Mathematica
  Gottingensis.

\bibitem{Oliver:1982}
R.~Oliver and T.~Petrie, \emph{{$G$}-{CW}-surgery and {$K_{0}({\bf Z}G)$}},
  Math. Z. \textbf{179} (1982), 11--42.

\bibitem{petrie1978}
T.~Petrie, \emph{Three theorems in transformation groups}, Algebraic topology,
  {A}arhus 1978 ({P}roc. {S}ympos., {U}niv. {A}arhus, {A}arhus, 1978), Lecture
  Notes in Math., vol. 763, Springer, Berlin, 1979, pp.~549--572.

\bibitem{spanier}
E.~H. Spanier, \emph{Algebraic topology}, McGraw-Hill Book Co., New York, 1966.

\bibitem{Straume:1981a}
E.~J. Straume, \emph{Dihedral transformation groups of homology spheres}, J.
  Pure Appl. Algebra \textbf{21} (1981), 51--74.

\bibitem{swan1a}
R.~G. Swan, \emph{Induced representations and projective modules}, Ann. of
  Math. (2) \textbf{71} (1960), 552--578.

\bibitem{swan1}
\bysame, \emph{Periodic resolutions for finite groups}, Ann. of Math. (2)
  \textbf{72} (1960), 267--291.

\bibitem{Dieck:1982}
T.~tom Dieck, \emph{Homotopiedarstellungen endlicher {G}ruppen:
  {D}imensionsfunktionen}, Invent. Math. \textbf{67} (1982), 231--252.

\bibitem{Dieck:1985a}
\bysame, \emph{The homotopy type of group actions on homotopy spheres}, Arch.
  Math. (Basel) \textbf{45} (1985), 174--179.

\bibitem{tomDieck2}
\bysame, \emph{Transformation groups}, de Gruyter Studies in Mathematics,
  vol.~8, Walter de Gruyter \& Co., Berlin, 1987.

\bibitem{Dieck:1985b}
T.~tom Dieck and P.~L{\"o}ffler, \emph{Verschlingung von {F}ixpunktmengen in
  {D}arstellungsformen. {I}}, Algebraic topology, {G}\"ottingen 1984, Lecture
  Notes in Math., vol. 1172, Springer, Berlin, 1985, pp.~167--187.

\bibitem{Dieck:1982a}
T.~tom Dieck and T.~Petrie, \emph{Homotopy representations of finite groups},
  Inst. Hautes \'Etudes Sci. Publ. Math. (1982), 129--169 (1983).

\bibitem{unlu1}
{\"O}.~{\"U}nl{\"u}, \emph{Constructions of free group actions on products of
  spheres}, Ph.D. thesis, University of Wisconsin, 2004.

\bibitem{unlu-yalcin3}
{\"O}.~{\"U}nl{\"u} and E.~Yal{\c{c}}{\i}n, \emph{Constructing homologically
  trivial actions on products of spheres}, Indiana Univ. Math. J. \textbf{62}
  (2013), 927--945.

\end{thebibliography}

\providecommand{\bysame}{\leavevmode\hbox to3em{\hrulefill}\thinspace}
\providecommand{\MR}{\relax\ifhmode\unskip\space\fi MR }
\providecommand{\MRhref}[2]{%
  \href{http://www.ams.org/mathscinet-getitem?mr=#1}{#2}
}
\providecommand{\href}[2]{#2}

\end{document}